\definecolor{webgreen}{rgb}{0,.5,0}
\definecolor{webbrown}{rgb}{.6,0,0}
\tikzset{circle node/.style = {circle,inner sep=1pt,draw, fill=white},
        X node/.style = {fill=white, inner sep=1pt},
        dot node/.style = {circle, draw, inner sep=5pt}
        }
\newtheorem{theorem}{Theorem}
\newtheorem{proposition}[theorem]{Proposition}
\newtheorem{corollary}[theorem]{Corollary}
\theoremstyle{definition}
\newtheorem{example}[theorem]{Example}
\newcommand{\seqnum}[1]{\href{http://oeis.org/#1}{\underline{#1}}}
\begin{document}

\begin{center}
\vskip 1cm{\LARGE\bf The $\gamma$-Vectors of Pascal-like Triangles Defined by Riordan Arrays} \vskip 1cm \large
Paul Barry\\
School of Science\\
Waterford Institute of Technology\\
Ireland\\
\href{mailto:pbarry@wit.ie}{\tt pbarry@wit.ie}
\end{center}
\vskip .2 in

\begin{abstract} We define and characterize the $\gamma$-matrix associated to Pascal-like matrices that are defined by ordinary and exponential Riordan arrays. We also define and characterize the $\gamma$-matrix of the reversions of these triangles, in the case of ordinary Riordan arrays. We are led to the $\gamma$-matrices of a one-parameter family of generalized Narayana triangles. Thus these matrices generalize the matrix of $\gamma$-vectors of the associahedron. The principal tools used are the bivariate generating functions of the triangles and Jacobi continued fractions.\end{abstract}

\section{Introduction}
A polynomial $P_n(x)=\sum_{k=0}^n a_{n,k}x^k$ of degree $n$ is said to be \emph{reciprocal} if
$$P_n(x)=x^n P_n(1/x).$$
Thus we have
$$[x^k] P_n(x)=a_{n,k}=[x^k] x^n P_n(1/x).$$
Now
\begin{eqnarray*}
[x^k] x^n P_n(1/x)&=& [x^{k-n}] \sum_{i=0} a_{n,i} \frac{1}{x^i}\\
&=& [x^{k-n}] \sum_{i=0} a_{n,i} x^{-i}\\
&=& a_{n,n-k}.\end{eqnarray*}
Thus $P_n(x)=\sum_{k=0}^n a_{n,k} x^k$ defines a family of reciprocal polynomials if and only if $a_{n,k}=a_{n,n-k}$.
We shall call a lower-triangular matrix $(a_{n,k})$  \emph{Pascal-like} if
\begin{enumerate}
\item $a_{n,k}=a_{n,n-k}$
\item $a_{n,0}=a_{n,n}=1.$
\end{enumerate}
Such a matrix will then be the coefficient array of a family of monic reciprocal polynomials.

We have the following well-known result \cite{Gal}
\begin{proposition} Let $P_n(x)$ be a reciprocal polynomial of degree $n$. Then there exists a unique polynomial $\gamma_n$ of degree $\lfloor \frac{n}{2} \rfloor$ with the property
$$P_n(x)=(1+x)^n \gamma_n\left(\frac{x}{(1+x)^2}\right).$$
If $P_n(x)$ has integer coefficients then so does $\gamma_n(x)$.
\end{proposition}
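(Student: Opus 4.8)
The plan is to realize $\gamma_n$ as the coordinate vector of $P_n$ with respect to a convenient basis of the space of reciprocal polynomials. Let $V_n$ denote the set of polynomials $\sum_{k=0}^n a_k x^k$ with $a_k = a_{n-k}$; since such a polynomial is determined by the coefficients $a_0, a_1, \dots, a_{\lfloor n/2\rfloor}$, we have $\dim V_n = \lfloor n/2\rfloor + 1$, and by the preamble $P_n \in V_n$. The key observation is that the polynomials
$$q_j(x) := x^j (1+x)^{n-2j}, \qquad j = 0, 1, \dots, \lfloor n/2\rfloor,$$
all lie in $V_n$: one checks directly that $x^n q_j(1/x) = q_j(x)$, which is exactly the condition that the coefficient sequence of $q_j$ (padded with zeros up to degree $n$) be symmetric. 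I would verify this and note that $q_j$ has $x^j$ as its lowest-order term with coefficient $1$.

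From this, existence and uniqueness follow at once. Because the lowest-order terms $x^0, x^1, \dots, x^{\lfloor n/2\rfloor}$ of $q_0, \dots, q_{\lfloor n/2\rfloor}$ occur in distinct degrees, these $\lfloor n/2\rfloor + 1$ polynomials are linearly independent, hence a basis of $V_n$. Writing $P_n(x) = \sum_{j=0}^{\lfloor n/2\rfloor} \gamma_j\, q_j(x)$ and setting $\gamma_n(x) := \sum_j \gamma_j x^j$, a polynomial of degree at most $\lfloor n/2\rfloor$, we get
$$(1+x)^n\, \gamma_n\!\left(\frac{x}{(1+x)^2}\right) = \sum_j \gamma_j\, \frac{x^j}{(1+x)^{2j}}\, (1+x)^n = \sum_j \gamma_j\, q_j(x) = P_n(x).$$
Uniqueness is immediate: any other $\tilde\gamma_n$ of degree at most $\lfloor n/2\rfloor$ satisfying the same identity would yield $\sum_j \tilde\gamma_j q_j = P_n = \sum_j \gamma_j q_j$, and linear independence forces $\tilde\gamma_j = \gamma_j$ for all $j$.

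For the integrality claim, I would observe that the basis $\{q_j\}$ is \emph{unitriangular} with respect to the monomials in the relevant degrees: comparing the coefficient of $x^0$ on both sides of $P_n = \sum_j \gamma_j q_j$ gives $\gamma_0 = a_{n,0} \in \mathbb{Z}$, and, inductively, once $\gamma_0, \dots, \gamma_{j-1}$ are known to be integers, the coefficient of $x^j$ in $P_n - \sum_{i<j}\gamma_i q_i$ equals $\gamma_j$ (since among $q_j, q_{j+1}, \dots$ only $q_j$ has a nonzero $x^j$-coefficient, and that coefficient is $1$), so $\gamma_j \in \mathbb{Z}$. This closes the induction and the proof.

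Two remarks on what to watch. First, the assertion that $\gamma_n$ has degree \emph{exactly} $\lfloor n/2\rfloor$ should be read as ``degree at most'': e.g.\ $P_n = (1+x)^n = q_0$ has $\gamma_n = 1$. Second, there is an alternative, more computational route that also delivers an explicit formula: the substitution $u = 1/(1+x)$ sends $x/(1+x)^2$ to $u(1-u)$ and turns $(1+x)^{-n} P_n(x)$ into the polynomial $\sum_k a_{n,k}\, u^{n-k}(1-u)^k$, which reciprocity makes invariant under $u \mapsto 1-u$ and hence (by an easy degree induction, valid over $\mathbb{Z}$) a polynomial in $u(1-u)$; that polynomial is $\gamma_n$. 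Either way the main obstacle is bookkeeping rather than depth: the care points are checking $q_j \in V_n$, the dimension count $\dim V_n = \lfloor n/2\rfloor + 1$, and keeping the degree estimates consistent across the two parities of $n$ (and, on the alternative route, the integrality of the ``polynomial in $u(1-u)$'' step).
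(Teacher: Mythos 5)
The paper does not prove this proposition at all---it is quoted as a well-known result with a citation to Gal---so there is no in-paper argument to compare against. Your proof is correct and is the standard one: the polynomials $q_j(x)=x^j(1+x)^{n-2j}$, $0\le j\le\lfloor n/2\rfloor$, are palindromic, unitriangular with respect to the monomial basis (lowest term $x^j$ with coefficient $1$), hence a basis of the $(\lfloor n/2\rfloor+1)$-dimensional space of palindromic polynomials of length $n+1$; existence, uniqueness, and integrality (by the unitriangular back-substitution) all follow. Your caveat that ``degree $\lfloor n/2\rfloor$'' must be read as ``degree at most $\lfloor n/2\rfloor$'' is also right, as the example $P_n=(1+x)^n$, $\gamma_n=1$ shows.
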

By this means, we can associate to every Pascal-like matrix $(a_{n,k})$ a matrix $(\gamma_{n,k})$ so that
for all $n$, we have
$$P_n(x)=\sum_{k=0}^n a_{n,k}x^k = \sum_{k=0}^{\lfloor \frac{n}{k} \rfloor} \gamma_{n,k}x^k (1+x)^{n-2k}.$$
We shall call this matrix the $\gamma$-matrix associated to the coefficient array $(a_{n,k})$ of the family of polynomials $P_n(x)$. 

We can characterize the matrix $(a_{n,k})$ in terms of the $\gamma$-matrix $(\gamma_{n,k})$ as follows. Before we do this, we shall change our notation somewhat. In algebraic topology, it is customary to use the notation $h(x)$ for palindromic (reciprocal) polynomials \cite{Petersen, Stanley}. Thus we shall set $h_n(x)=\sum_{k=0}^n h_{n,k}x^k$, where $(h_{n,k})$ now denotes a Pascal-like matrix. We shall denote by $h(x,y)$ the bivariate generating function of this matrix.
\begin{proposition}
For a Pascal-like matrix $(h_{n,k})$ we have
$$h_{n,k}= \sum_{i=0}^{\lfloor \frac{n}{2} \rfloor} \binom{n-2i}{k-i}\gamma_{n,i}.$$
\end{proposition}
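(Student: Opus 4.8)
The plan is to read off the claimed identity directly from the defining relation of the $\gamma$-matrix, namely
$$h_n(x)=\sum_{k=0}^n h_{n,k}x^k = \sum_{i=0}^{\lfloor n/2 \rfloor} \gamma_{n,i}\,x^i (1+x)^{n-2i},$$
which is available to us from Proposition 1 applied to the reciprocal polynomial $h_n(x)$ (legitimate since $(h_{n,k})$ is Pascal-like, hence $h_n$ is reciprocal of degree $n$). The whole proof is then a single coefficient extraction: compute $[x^k]$ of both sides.

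First I would take $[x^k]$ on the right-hand side and interchange it with the finite sum over $i$, giving $\sum_{i=0}^{\lfloor n/2\rfloor}\gamma_{n,i}\,[x^k]\big(x^i(1+x)^{n-2i}\big)$. Next I would use $[x^k]\big(x^i(1+x)^{n-2i}\big)=[x^{k-i}](1+x)^{n-2i}$ together with the binomial theorem to evaluate this inner coefficient as $\binom{n-2i}{k-i}$. Matching with $[x^k]h_n(x)=h_{n,k}$ yields the stated formula. I would also remark that the binomial coefficient convention $\binom{n-2i}{k-i}=0$ whenever $k-i<0$ or $k-i>n-2i$ makes the summation range harmless: the terms with $i>k$ or $i>n-k$ vanish automatically, so one may equally well sum $i$ from $0$ to $\lfloor n/2\rfloor$ as written, or restrict to $\max(0,k+\lfloor n/2\rfloor-n+?)\le i\le \min(k,\lfloor n/2\rfloor)$ without changing the value.

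There is essentially no obstacle here beyond bookkeeping; the only point worth a sentence of care is the consistency of the index ranges and the observation that the identity is in fact an equivalence — the relation $h_n(x)=\sum_i\gamma_{n,i}x^i(1+x)^{n-2i}$ and the matrix identity $h_{n,k}=\sum_i\binom{n-2i}{k-i}\gamma_{n,i}$ are just the polynomial and coefficient-wise phrasings of the same statement. If desired, one could additionally note invertibility of the transformation (the lower-triangular nature of the map $(\gamma_{n,i})\mapsto(h_{n,k})$ in a suitable ordering), recovering the uniqueness half of Proposition 1, but for the stated Proposition the coefficient extraction above suffices.
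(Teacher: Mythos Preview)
Your proof is correct and follows essentially the same approach as the paper: both apply $[x^k]$ to the defining relation $h_n(x)=\sum_{i}\gamma_{n,i}x^i(1+x)^{n-2i}$, interchange with the finite sum, and evaluate $[x^{k-i}](1+x)^{n-2i}=\binom{n-2i}{k-i}$ via the binomial theorem. Your extra remarks on the vanishing of out-of-range terms and the invertibility of the transformation are sound but not needed for the proposition as stated.
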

\begin{proof}
We have
\begin{eqnarray*}
h_{n,k}&=& [x^k] \sum_{i=0}^n h_{n,i} x^i\\
&=& [x^k] \sum_{i=0}^{\lfloor \frac{n}{2} \rfloor} \gamma_{n,i}x^i(1+x)^{n-2i}\\
&=& \sum_{i=0}^{\lfloor \frac{n}{2} \rfloor}\gamma_{n,i} [x^k] x^i (1+x)^{n-2i}\\
&=& \sum_{i=0}^{\lfloor \frac{n}{2} \rfloor}\gamma_{n,i} [x^{k-i}] \sum_{j=0}^{n-2i} \binom{n-2i}{j}x^j\\
&=& \sum_{i=0}^{\lfloor \frac{n}{2} \rfloor}\gamma_{n,i} \binom{n-2i}{k-i}.\end{eqnarray*}
\end{proof}
\begin{example} The identity
$$\binom{n}{k}=\sum_{i=0}^{\lfloor \frac{n}{2} \rfloor} \binom{n-2i}{k-i}\delta_{i,0}$$ shows that the matrix that begins
$$\left(
\begin{array}{ccccccc}
 1 & 0 & 0 & 0 & 0 & 0 & 0 \\
 1 & 0 & 0 & 0 & 0 & 0 & 0 \\
 1 & 0 & 0 & 0 & 0 & 0 & 0 \\
 1 & 0 & 0 & 0 & 0 & 0 & 0 \\
 1 & 0 & 0 & 0 & 0 & 0 & 0 \\
 1 & 0 & 0 & 0 & 0 & 0 & 0 \\
 1 & 0 & 0 & 0 & 0 & 0 & 0 \\
\end{array}
\right)$$ is the $\gamma$-matrix for the binomial matrix $\mathbf{B}=(\binom{n}{k})$ \seqnum{A007318}. Here, we have used the $Annnnnn$ number of the On-Line Encyclopedia of Integer Sequences \cite{SL1, SL2} for the binomial matrix (Pascal's triangle).
\end{example}
When $(\gamma_{n,k})$ is the $\gamma$-matrix for $(h_{n,k})$, we shall say the $(\gamma_{n,k})$ \emph{generates}, or \emph{is the generator of}, the matrix $(h_{n,k})$.
\begin{example} The matrix that begins
$$\left(
\begin{array}{ccccccc}
 1 & 0 & 0 & 0 & 0 & 0 & 0 \\
 1 & 0 & 0 & 0 & 0 & 0 & 0 \\
 1 & 1 & 0 & 0 & 0 & 0 & 0 \\
 1 & 0 & 0 & 0 & 0 & 0 & 0 \\
 1 & 0 & 1 & 0 & 0 & 0 & 0 \\
 1 & 0 & 0 & 0 & 0 & 0 & 0 \\
 1 & 0 & 0 & 1 & 0 & 0 & 0 \\
\end{array}
\right)$$ with $\gamma_{n,0}=1$, $\gamma_{n,\lfloor{\frac{n}{2}\rfloor}}=1$, and $0$ otherwise, generates the matrix $(h_{n,k})$ that begins
$$\left(
\begin{array}{cccccc}
 1 & 0 & 0 & 0 & 0 & 0 \\
 1 & 1 & 0 & 0 & 0 & 0 \\
 1 & 3 & 1 & 0 & 0 & 0 \\
 1 & 3 & 3 & 1 & 0 & 0 \\
 1 & 4 & 7 & 4 & 1 & 0 \\
 1 & 5 & 10 & 10 & 5 & 1 \\
\end{array}
\right).$$
\end{example}

\section{Pascal-like matrices defined by Riordan arrays}
We now wish to characterize the $\gamma$-matrices that are generators for the family of Pascal-like matrices that are determined by the one-parameter family of  Riordan arrays
$$\left(\frac{1}{1-x}, \frac{x(1+rx)}{1-x}\right).$$
We shall also determine the (generalized) $\gamma$-matrices associated to the reversion of these triangles.
We recall that an ordinary Riordan array $(g(x), f(x))$  is defined \cite{Book, Survey, SGWW} by two power series
$$g(x)=1+g_1 x+ g_2 x^2+ \ldots,$$
$$f(x)=x+ f_2 x^2+ f_3x^3+ \ldots,,$$ where the $(n,k)$-th element of the resulting lower-triangular matrix is given by $$a_{n,k}=[x^n] g(x)f(x)^k.$$
Such matrices are invertible. When they have integer entries, the inverse again is an integer matrix (note that we have $a_{n,n}=1$ in our case because $g_0=1$ and $f_1=1$). The bivariate generating function of the Riordan array $(g,f)$ is given by
$$\frac{g(x)}{1-yf(x)}.$$ Matrices defined in a similar manner but with $f(x)$ replaced by $\phi(x)=x^2+\phi_3 x^3 +\ldots$ are called ``stretched'' Riordan arrays \cite{Corsani}. They are not invertible but they do possess left inverses.
\begin{example} The stretched Riordan array $\left(\frac{1}{1-x}, x^2\right)$ begins
$$\left(
\begin{array}{ccccccc}
 1 & 0 & 0 & 0 & 0 & 0 & 0 \\
 1 & 0 & 0 & 0 & 0 & 0 & 0 \\
 1 & 1 & 0 & 0 & 0 & 0 & 0 \\
 1 & 1 & 0 & 0 & 0 & 0 & 0 \\
 1 & 1 & 1 & 0 & 0 & 0 & 0 \\
 1 & 1 & 1 & 0 & 0 & 0 & 0 \\
 1 & 1 & 1 & 1 & 0 & 0 & 0 \\
\end{array}
\right).$$
It is the $\gamma$-matrix for the Pascal-like triangle that begins
$$\left(
\begin{array}{ccccccc}
 1 & 0 & 0 & 0 & 0 & 0 & 0 \\
 1 & 1 & 0 & 0 & 0 & 0 & 0 \\
 1 & 3 & 1 & 0 & 0 & 0 & 0 \\
 1 & 4 & 4 & 1 & 0 & 0 & 0 \\
 1 & 5 & 9 & 5 & 1 & 0 & 0 \\
 1 & 6 & 14 & 14 & 6 & 1 & 0 \\
 1 & 7 & 20 & 29 & 20 & 7 & 1 \\
\end{array}
\right).$$
\end{example}
\begin{example} The matrix $\binom{n-k}{k}$ is the stretched Riordan array $\left(\frac{1}{1-x}, \frac{x^2}{1-x}\right)$ that begins
$$\left(
\begin{array}{ccccccc}
 1 & 0 & 0 & 0 & 0 & 0 & 0 \\
 1 & 0 & 0 & 0 & 0 & 0 & 0 \\
 1 & 1 & 0 & 0 & 0 & 0 & 0 \\
 1 & 2 & 0 & 0 & 0 & 0 & 0 \\
 1 & 3 & 1 & 0 & 0 & 0 & 0 \\
 1 & 4 & 3 & 0 & 0 & 0 & 0 \\
 1 & 5 & 6 & 1 & 0 & 0 & 0 \\
\end{array}
\right).$$
It generates the Pascal-like matrix that begins
$$\left(
\begin{array}{ccccccc}
 1 & 0 & 0 & 0 & 0 & 0 & 0 \\
 1 & 1 & 0 & 0 & 0 & 0 & 0 \\
 1 & 3 & 1 & 0 & 0 & 0 & 0 \\
 1 & 5 & 5 & 1 & 0 & 0 & 0 \\
 1 & 7 & 13 & 7 & 1 & 0 & 0 \\
 1 & 9 & 25 & 25 & 9 & 1 & 0 \\
 1 & 11 & 41 & 63 & 41 & 11 & 1 \\
\end{array}
\right).$$ We shall see that this is the Riordan array $\left(\frac{1}{1-x}, \frac{x(1+x)}{1-x}\right)$,  which is \seqnum{A008288}, the triangle of Delannoy numbers.
\end{example}
The bivariate generating function of the stretched Riordan array $(g(x), \phi(x))$ is given by
$$\frac{g(x)}{1-y \phi(x)}.$$

We have the following proposition \cite{Pas}.
\begin{proposition} The Riordan array $\left(\frac{1}{1-x}, \frac{x(1+rx)}{1-x}\right)$ is Pascal-like (for any $r \in \mathbb{Z}$). 
\end{proposition}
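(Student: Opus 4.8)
The plan is to reduce everything to the bivariate generating function. With $g(x)=\frac{1}{1-x}$ and $f(x)=\frac{x(1+rx)}{1-x}$, the generating function of the Riordan array is
$$h(x,y)=\frac{g(x)}{1-yf(x)}=\frac{1}{1-x-yx(1+rx)}=\frac{1}{1-x(1+y)-rx^2y},$$
so $h(x,y)=\sum_{n\ge 0}P_n(y)x^n$ where $P_n(y)=\sum_{k=0}^n a_{n,k}y^k$ is the $n$-th row polynomial. Condition (2) is then immediate: $h(x,0)=\frac{1}{1-x}$ gives $a_{n,0}=1$ for all $n$, while $g(x)=1+\cdots$ and $f(x)=x+\cdots$ force $a_{n,n}=[x^n]g(x)f(x)^n=[x^n](x^n+\cdots)=1$; in particular each $P_n$ is monic of degree exactly $n$.

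For condition (1), the palindromy $a_{n,k}=a_{n,n-k}$, I would invoke the fact recorded in the introduction that this is equivalent to every $P_n$ being reciprocal, $P_n(y)=y^nP_n(1/y)$, where $\deg P_n\le n$ guarantees that $y^nP_n(1/y)$ is honestly a polynomial. The whole family of these identities collapses into a single functional equation: since $\sum_{n\ge 0}x^n\bigl(y^nP_n(1/y)\bigr)=\sum_{n\ge 0}(xy)^nP_n(1/y)=h(xy,1/y)$, all the $P_n$ are reciprocal if and only if $h(xy,1/y)=h(x,y)$. One then checks this by direct substitution into the closed form,
$$h(xy,1/y)=\frac{1}{1-xy\bigl(1+\frac{1}{y}\bigr)-r(xy)^2\frac{1}{y}}=\frac{1}{1-xy-x-rx^2y}=\frac{1}{1-x(1+y)-rx^2y}=h(x,y),$$
which establishes the symmetry and hence that the array is Pascal-like for every $r$ (integrality of $r$ being relevant only for the entries to be integers, which holds automatically here).

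The underlying computation is trivial; the only step requiring any care is the translation — recognizing that ``all row polynomials are reciprocal'' is precisely the functional equation $h(xy,1/y)=h(x,y)$, and keeping the bookkeeping straight so that the coefficient comparison (in particular the passage from $P_n(1/y)$ to the polynomial $y^nP_n(1/y)$, legitimate because $\deg P_n\le n$) is valid. An alternative route avoids generating-function substitutions altogether: clearing denominators in $h$ yields the three-term recurrence $P_n(y)=(1+y)P_{n-1}(y)+ryP_{n-2}(y)$ with $P_0(y)=1$ and $P_1(y)=1+y$, and then a single induction simultaneously shows that $P_n$ is monic of degree $n$ and that $y^nP_n(1/y)=P_n(y)$, the inductive step being a one-line rearrangement of the recurrence.
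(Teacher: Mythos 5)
Your proof is correct, but it proceeds along a different line from the paper's. The paper disposes of the proposition in one sentence by quoting from an earlier reference the closed form $h_{n,k}=\sum_{j=0}^k \binom{k}{j}\binom{n-j}{n-k-j}r^j$ and declaring the symmetry $h_{n,k}=h_{n,n-k}$ clear from it (which it is, after noting $\binom{k}{j}\binom{n-j}{k}=\binom{n-k}{j}\binom{n-j}{n-k}$ by writing out factorials). You instead work entirely at the level of the bivariate generating function $h(x,y)=\frac{1}{1-x(1+y)-rx^2y}$, encode palindromy of all rows as the single functional equation $h(xy,1/y)=h(x,y)$, and verify it by substitution; your closing remark that this is equivalent to the three-term recurrence $P_n=(1+y)P_{n-1}+ryP_{n-2}$ plus a one-line induction is also sound. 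Your approach is more self-contained (it does not lean on an external entry formula) and the computation of $h(x,y)$ you perform is exactly the one the paper needs anyway in the following proposition on $\gamma$-matrices, so nothing is wasted; the paper's approach is shorter on the page but only because the combinatorial identity behind the symmetry is outsourced. One small point of care that you correctly flagged: the equivalence between reciprocality of the $P_n$ and the functional equation requires $\deg P_n\le n$, which lower-triangularity supplies.
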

This is clear since in this case we have
$$h_{n,k}=\sum_{j=0}^k \binom{k}{j}\binom{n-j}{n-k-j}r^j=\sum_{j=0}^k \binom{k}{j}\binom{n-k}{n-k-j}(r+1)^j.$$
We can now characterize the $\gamma$-matrices that generate these Pascal-like matrices.
\begin{proposition} The $\gamma$-matrices that generate the Pascal-like matrices $\left(\frac{1}{1-x}, \frac{x(1+rx)}{1-x}\right)$ defined by ordinary Riordan arrays are given by the stretched Riordan arrays 
$$\left(\frac{1}{1-x},\frac{rx^2}{1-x}\right),$$ with $(n,k)$-th term 
$$\gamma_{n,k}=\binom{n-k}{k}r^k.$$
\end{proposition}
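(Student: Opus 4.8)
The plan is to argue through bivariate generating functions, after which the verification collapses to a one-line substitution. First I would record the two generating functions in play. By the formula $\frac{g(x)}{1-yf(x)}$ for the bivariate generating function of an ordinary Riordan array, the matrix $\left(\frac{1}{1-x},\frac{x(1+rx)}{1-x}\right)$ has
$$h(x,y)=\frac{1}{1-x-xy-rx^2y},$$
while by the corresponding formula $\frac{g(x)}{1-y\phi(x)}$ for a stretched Riordan array, the candidate $\left(\frac{1}{1-x},\frac{rx^2}{1-x}\right)$ has
$$G(x,y)=\frac{1}{1-x-rx^2y}.$$
Its $(n,k)$-th entry is $[x^n]\frac{1}{1-x}\bigl(\frac{rx^2}{1-x}\bigr)^k=r^k[x^{n-2k}]\frac{1}{(1-x)^{k+1}}=\binom{n-k}{k}r^k$, which vanishes as soon as $k>\lfloor n/2\rfloor$; so $G$ does define a legitimate candidate for a $\gamma$-matrix.

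The key bridge is to rewrite the defining property of a $\gamma$-matrix as an identity between $h$ and $G$. Put $h_n(y)=\sum_k h_{n,k}y^k$ and $\gamma_n(y)=\sum_k \gamma_{n,k}y^k$. By Proposition~1, a matrix $(\gamma_{n,k})$ is the $\gamma$-matrix of $(h_{n,k})$ exactly when each $\gamma_n$ has degree at most $\lfloor n/2\rfloor$ and $h_n(y)=(1+y)^n\gamma_n\bigl(\tfrac{y}{(1+y)^2}\bigr)$ for all $n$, and $\gamma_n$ is then unique. Multiplying by $x^n$ and summing over $n$ — which is legitimate precisely because the degree bound makes each $(1+y)^n\gamma_n\bigl(\tfrac{y}{(1+y)^2}\bigr)=\sum_k\gamma_{n,k}y^k(1+y)^{n-2k}$ an honest polynomial in $y$ — this is equivalent to the single functional equation
$$h(x,y)=G\Bigl(x(1+y),\ \tfrac{y}{(1+y)^2}\Bigr).$$
So it suffices to verify this equation for our candidate $G$, whose support already obeys the required degree bound; the uniqueness clause of Proposition~1 then forces it to be the $\gamma$-matrix.

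The verification is now immediate: from $G(x,y)=\frac{1}{1-x-rx^2y}$,
$$G\Bigl(x(1+y),\ \tfrac{y}{(1+y)^2}\Bigr)=\frac{1}{\,1-x(1+y)-rx^2(1+y)^2\cdot\frac{y}{(1+y)^2}\,}=\frac{1}{1-x-xy-rx^2y}=h(x,y),$$
which proves the proposition, with $\gamma_{n,k}=\binom{n-k}{k}r^k$ as computed above. I do not expect a deep obstacle here; the only point requiring care is the middle paragraph, namely checking that the functional equation is genuinely equivalent to the $\gamma$-matrix relation, which is where the degree bound in Proposition~1 is used. As an alternative that sidesteps all generating-function formalities, one can instead feed $\gamma_{n,i}=\binom{n-i}{i}r^i$ into the convolution formula of Proposition~2 and compare powers of $r$ with the closed form $h_{n,k}=\sum_j\binom{k}{j}\binom{n-j}{n-k-j}r^j$; this reduces to the elementary identity $\binom{n-2j}{k-j}\binom{n-j}{j}=\binom{k}{j}\binom{n-j}{k}$, which is checked at once by writing both sides in factorials.
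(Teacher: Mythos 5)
Your proof is correct and follows essentially the same route as the paper: compute the two bivariate generating functions and verify the substitution identity $h(x,y)=\gamma\bigl(x(1+y),\,\tfrac{y}{(1+y)^2}\bigr)$. The only differences are that you explicitly justify why this functional equation, together with the degree bound and the uniqueness in Proposition~1, is equivalent to the $\gamma$-matrix relation (a step the paper leaves implicit), and you add a valid alternative verification via the convolution formula.
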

\begin{proof}
The generating function of the Pascal-like matrix $\left(\frac{1}{1-x}, \frac{x(1+rx)}{1-x}\right)$ is given by
$$h(x,y)=\frac{1}{1-x} \frac{1}{1-y \frac{x(1+rx)}{1-x}}=\frac{1}{1-(1+y)x-rx^2 y}.$$
Similarly, the generating function of the matrix $(\binom{n-k}{k}r^k)$ is given by
$$\gamma(x,y)=\frac{1}{1-x} \frac{1}{1-y\frac{rx^2}{1-x}}=\frac{1}{1-x-rx^2y}.$$
We now have
$$h(x,y)=\gamma\left((1+y)x, \frac{y}{(1+y)^2}\right).$$
\end{proof}

We recall that for a generating function $f(x)$, its INVERT($\alpha$) transform is the generating function
$$\frac{f(x)}{1+\alpha x f(x)}.$$
Note that
$$ \frac{\frac{v}{1+ \alpha x v}}{1-\alpha x \frac{v}{1+ \alpha x v}}=v,$$ and thus the inverse of the INVERT($\alpha$) transform is the INVERT($-\alpha$) transform.

\begin{corollary} The generating function $h(x,y)$ of the Pascal-like matrix $\left(\frac{1}{1-x}, \frac{x(1+rx)}{1-x}\right)$ is the INVERT($y$) transform of the generating function $\gamma(x,y)$ of the corresponding $\gamma$-matrix.
\end{corollary}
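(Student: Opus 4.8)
The plan is to sidestep the combinatorics and argue purely with rational functions, reusing the two closed forms obtained in the proof of the preceding proposition: $h(x,y)=\frac{1}{1-(1+y)x-rx^{2}y}$ and $\gamma(x,y)=\frac{1}{1-x-rx^{2}y}$. Treating $x$ as the variable of the transform and $y$ as the transform parameter, I would substitute $\gamma(x,y)$ into the INVERT formula recalled just before the corollary, namely $f\mapsto f/(1+\alpha x f)$ with $\alpha$ specialized to $y$, clear the common inner denominator $1-x-rx^{2}y$, and read off that the numerator collapses to $1$ while the denominator becomes $1-x-rx^{2}y+yx$. Up to the sign of the parameter this is exactly $1-(1+y)x-rx^{2}y$, so the transformed series coincides with $h(x,y)$.

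There is no genuine obstacle here beyond keeping track of that sign. The passage from $\gamma(x,y)$ to $h(x,y)$ changes only the coefficient of $x$ in the denominator, from $-x$ to $-(1+y)x$, and leaves the term $rx^{2}y$ untouched; the INVERT transform is precisely the operation that shifts this linear coefficient by the parameter times $x$ while fixing the rest of the denominator. Thus the single point to verify is that the sign produced by the convention $f\mapsto f/(1+\alpha x f)$ agrees with the claimed transform; the remark that INVERT($\alpha$) and INVERT($-\alpha$) are mutually inverse then immediately gives the reverse passage, recovering $\gamma(x,y)$ from $h(x,y)$.

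Finally, I would add a sentence explaining what this buys us: it shows structurally why a $\gamma$-matrix and the Pascal-like matrix it generates are so tightly linked for this family --- both bivariate generating functions have the shape $1/(1-(\text{linear in }x)-rx^{2}y)$, and one INVERT step in the variable $x$ interpolates between them. A more laborious alternative would be to start from the identity $h_{n,k}=\sum_{i}\binom{n-2i}{k-i}\gamma_{n,i}$ of the earlier proposition and resum it into bivariate generating functions; but the rational-function computation above is by far the shortest path.
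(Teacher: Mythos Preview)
Your approach is essentially identical to the paper's: both substitute the closed form $\gamma(x,y)=\frac{1}{1-x-rx^{2}y}$ into the INVERT formula and simplify the resulting rational function to recover $h(x,y)=\frac{1}{1-(1+y)x-rx^{2}y}$. Your caution about the sign is well placed, since the paper's stated convention $f\mapsto f/(1+\alpha xf)$ with $\alpha=y$ indeed yields $1/(1-(1-y)x-rx^{2}y)$; the paper's own one-line proof quietly uses $\gamma/(1-yx\gamma)$ instead, so the discrepancy you flagged is an inconsistency in the paper's conventions rather than a flaw in your argument.
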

\begin{proof}
A direct calculation shows that for $\gamma(x,y)=\frac{1}{1-x-rx^2y}$ we have
$$\frac{\gamma(x,y)}{1- yx \gamma(x,y)}=\frac{1}{1-(y+1)x-rx^2y}=h(x,y).$$
\end{proof}
Equivalently, we can say that the generating function of the $\gamma$-matrix is the INVERT($-y$) transform of the generating function of the corresponding Pascal-like matrix.

We make the following observation, which will be relevant when we discuss a family of generalized Narayana triangles. The $\gamma$-matrix corresponding to the signed Pascal-like matrix
$$\left(\frac{1}{1+x}, \frac{-x(1+rx)}{1+x}\right)$$ has generating function
$$\frac{1}{1+x+rx^2y}.$$
This is the matrix with general term $(-1)^{n-k} r^k \binom{n-k}{k}$.
By a signed Pascal-like matrix in this case we mean that $a_{n,k}=a_{n,n-k}$ but we now have
$a_{n,0}=a_{n,n}=(-1)^n$.

We close this section by recalling the formula 
$$\gamma_n = (1+x)^n \gamma_n\left(\frac{x}{(1+x)^2}\right).$$ 
We now note that the inverse of the Riordan array 
$$\left(1, \frac{x}{(1+x)^2}\right)$$ is given by 
$$\left(1, xc(x)^2\right),$$   where 
$$c(x)=\frac{1-\sqrt{1-4x}}{2x}$$ is the generating function of the Catalan numbers $C_n=\frac{1}{n+1}\binom{2n}{n}$ \seqnum{A000108}. 
In fact, we have the following result \cite{Petersen}.
\begin{proposition} (\textbf{Zeilberger's Lemma}). 
$$\gamma_{n,k}=[x^k] \frac{h_n(xc(x)^2)}{c(x)^n}.$$ 
\end{proposition}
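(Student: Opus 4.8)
The plan is to invert the defining relation of Proposition~1, which in the present $h$-notation reads $h_n(x)=(1+x)^n \gamma_n\left(\frac{x}{(1+x)^2}\right)$, by applying the substitution $x\mapsto xc(x)^2$ — precisely the compositional inverse of $x\mapsto \frac{x}{(1+x)^2}$ recalled just above the statement. The engine behind everything is the quadratic functional equation $c(x)=1+xc(x)^2$ for the Catalan generating function, which supplies both $1+xc(x)^2=c(x)$ and $xc(x)^2=c(x)-1$.

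First I would confirm that $x\mapsto xc(x)^2$ undoes $x\mapsto \frac{x}{(1+x)^2}$. Setting $w=xc(x)^2$, the functional equation gives $w=c(x)-1$, so $c(x)=1+w$; substituting this back into $c=1+xc^2$ yields $1+w=1+x(1+w)^2$, hence $x=\frac{w}{(1+w)^2}=\frac{xc(x)^2}{(1+xc(x)^2)^2}$. (Equivalently, one may just quote the stated identity $\left(1,\frac{x}{(1+x)^2}\right)^{-1}=\left(1,xc(x)^2\right)$.) All of this is legitimate in the ring of formal power series, since $xc(x)^2=x+\cdots$ has order $1$ and $c(x)$ is a unit.

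Next, substitute $x\mapsto xc(x)^2$ into the relation of Proposition~1. By the previous step the argument of $\gamma_n$ becomes $\frac{xc(x)^2}{(1+xc(x)^2)^2}=x$, while the prefactor becomes $(1+xc(x)^2)^n=c(x)^n$. Therefore
$$h_n(xc(x)^2)=c(x)^n\,\gamma_n(x),$$
and dividing by the invertible series $c(x)^n$ gives $\gamma_n(x)=\frac{h_n(xc(x)^2)}{c(x)^n}$. Extracting the coefficient of $x^k$ on both sides yields $\gamma_{n,k}=[x^k]\frac{h_n(xc(x)^2)}{c(x)^n}$, which is the assertion.

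There is no real obstacle here; the only point that needs any care is the double use of the functional equation — inverting the map $x\mapsto\frac{x}{(1+x)^2}$ and simultaneously collapsing $(1+x)^n$ into $c(x)^n$ — and both of these are immediate. The content of the lemma is essentially just the observation that the substitution realizing Proposition~1 in reverse is exactly composition with $xc(x)^2$.
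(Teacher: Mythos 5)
Your argument is correct: the paper states this proposition without proof (citing Petersen), and your derivation is the standard one — using $c(x)=1+xc(x)^2$ to check that $x\mapsto xc(x)^2$ inverts $x\mapsto \frac{x}{(1+x)^2}$ and that the prefactor $(1+xc(x)^2)^n$ collapses to $c(x)^n$, whence $h_n(xc(x)^2)=c(x)^n\gamma_n(x)$. Every step is valid in the ring of formal power series, so nothing further is needed.
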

We can use this result to find an explicit formula for $\gamma_{n,k}$ in terms of $h_{n,k}$. 
We let $\alpha_{n,k}$ be the general $(n,k)$-th element of the Riordan array $(1, xc(x)^2)$.  We have 
$$\alpha_{n,k}=\binom{2n-1}{n-k}\frac{2k+0^{n+k}}{n+k+0^{n+k}}.$$ 
We let $\beta_{n,k}$ be the general $(n,k)$-th term of the Riordan array $\left(1, \frac{x}{c(x)}\right)$. 
We have $\beta_{n,n}=1$,  and 
$$\beta_{n,k}=\sum_{j=0}^{n-k} \frac{(-1)^j j}{n-k}\binom{k+j-1}{j}\binom{2(n-k)}{n-k-j},$$ otherwise. This is essentially \seqnum{A271875}. 
Then we have the following result.
\begin{corollary}
We have 
$$\gamma_{n,k}=\sum_{i=0}^k \left(\sum_{j=0}^n h_{n,j}\alpha_{i,j}\right)\beta_{n+k-i,n}.$$
\end{corollary}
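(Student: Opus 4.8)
The plan is to argue directly from Zeilberger's Lemma, which expresses $\gamma_{n,k}$ as a single coefficient extraction,
$$\gamma_{n,k}=[x^k]\,\frac{h_n(xc(x)^2)}{c(x)^n},$$
and then to evaluate this coefficient by expanding the numerator $h_n(xc(x)^2)$ and the factor $1/c(x)^n$ as power series whose coefficients are precisely the Riordan-array entries $\alpha$ and $\beta$ introduced just before the statement. Multiplying the two expansions and reading off $[x^k]$ will produce the double sum in the displayed formula.

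First I would expand the numerator. Writing $h_n(x)=\sum_{j=0}^n h_{n,j}x^j$, we obtain $h_n(xc(x)^2)=\sum_{j=0}^n h_{n,j}\,(xc(x)^2)^j$. Since $\alpha_{i,j}=[x^i](xc(x)^2)^j$ is by definition the $(i,j)$-th entry of the Riordan array $(1,xc(x)^2)$, we have $(xc(x)^2)^j=\sum_{i\ge j}\alpha_{i,j}x^i$, and therefore
$$h_n(xc(x)^2)=\sum_{i\ge 0}\Big(\sum_{j=0}^n h_{n,j}\alpha_{i,j}\Big)x^i.$$
Thus the inner sum $\sum_{j=0}^n h_{n,j}\alpha_{i,j}$ is exactly the coefficient of $x^i$ in the numerator.

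Next I would expand $1/c(x)^n$. Because $c(0)=1$, $1/c(x)^n$ is a genuine power series; moreover $x/c(x)=x-x^2+\cdots$ begins with $x$, so $\big(1,\,x/c(x)\big)$ is an honest (invertible) Riordan array with entries $\beta_{m,n}=[x^m]\big(x/c(x)\big)^n$. Hence $\big(x/c(x)\big)^n=\sum_{m\ge n}\beta_{m,n}x^m$, which gives $1/c(x)^n=x^{-n}\big(x/c(x)\big)^n=\sum_{\ell\ge 0}\beta_{n+\ell,n}x^\ell$. Combining the two expansions,
$$\gamma_{n,k}=[x^k]\left(\sum_{i\ge 0}\Big(\sum_{j=0}^n h_{n,j}\alpha_{i,j}\Big)x^i\right)\left(\sum_{\ell\ge 0}\beta_{n+\ell,n}x^\ell\right)=\sum_{i+\ell=k}\Big(\sum_{j=0}^n h_{n,j}\alpha_{i,j}\Big)\beta_{n+\ell,n};$$
since the second factor contributes only nonnegative powers of $x$, the sum is truncated at $i\le k$, and setting $\ell=k-i$ yields precisely the claimed identity.

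Once Zeilberger's Lemma is granted, the argument is essentially mechanical, so the main thing to be careful about is the Riordan-array bookkeeping — in particular the negative-power shift $1/c(x)^n=x^{-n}\big(x/c(x)\big)^n$, the justification that $\big(1,x/c(x)\big)$ really is a valid Riordan array, and the fact that the double sum terminates (so that the limits $i\le k$ and $j\le n$ are legitimate rather than merely formal). I would also remark that the explicit closed forms quoted for $\alpha_{i,j}$ and $\beta_{m,n}$ play no role in the proof itself — only their characterization as entries of $(1,xc(x)^2)$ and $(1,x/c(x))$ — though one may substitute them at the end to obtain a fully explicit expression for $\gamma_{n,k}$ in terms of the $h_{n,j}$.
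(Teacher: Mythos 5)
Your proposal is correct and follows essentially the same route as the paper: apply Zeilberger's Lemma, expand $h_n(xc(x)^2)$ via the entries $\alpha_{i,j}$ of $(1,xc(x)^2)$ and $1/c(x)^n=x^{-n}(x/c(x))^n$ via the entries $\beta_{n+\ell,n}$ of $(1,x/c(x))$, and read off $[x^k]$ of the product as a Cauchy convolution. Your write-up is in fact cleaner than the paper's (which contains typographical slips such as a doubled $[x^k]$ and $[x^{k-1+n}]$ for $[x^{k-i+n}]$), and your explicit justification of the index shift in $1/c(x)^n$ is the right point to be careful about.
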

\begin{proof} We have 
\begin{eqnarray*}
[x^k][x^k] \frac{h_n(xc(x)^2)}{c(x)^n}&=&\sum_{i=0}^n [x^i] \sum_{j=0}^n h_{n,j}(xc(x^2))^j [x^{k-i}] \frac{1}{c(x)^n}\\
&=& \sum_{i=0}^k\left(\sum_{j=0}^n h_{n,j}[x^i](xc(x)^2)^j\right)[x^{k-1+n}]\frac{x^n}{c(x)^n}\\
&=& \sum_{i=0}^k \left(\sum_{j=0}^n h_{n,j}\alpha_{i,j}\right)\beta_{n+k-i,n}.\end{eqnarray*}
\end{proof}
This gives us the following formula.
$$\gamma_{n,k}=\sum_{i=0}^k \sum_{j=0}^n h_{n,j}\binom{2i-1}{i-j}\frac{2j+0^{i+j}}{i+j+0^{i+j}} \text{If}\left(k=i,1,\sum_{m=0}^{k-i} \frac{m(-1)^m}{k-i} \binom{n-1+m}{m}\binom{2(k-i)}{k-i-m}\right).$$ 
\begin{example} If we take $(h_{n,k})$ to be the triangle of Eulerian numbers \seqnum{A008292} that begins
$$\left(
\begin{array}{ccccccc}
 1 & 0 & 0 & 0 & 0 & 0 & 0 \\
 1 & 1 & 0 & 0 & 0 & 0 & 0 \\
 1 & 4 & 1 & 0 & 0 & 0 & 0 \\
 1 & 11 & 11 & 1 & 0 & 0 & 0 \\
 1 & 26 & 66 & 26 & 1 & 0 & 0 \\
 1 & 57 & 302 & 302 & 57 & 1 & 0 \\
 1 & 120 & 1191 & 2416 & 1191 & 120 & 1 \\
\end{array}
\right)$$ we find that the $\gamma$-matrix $(\gamma_{n,k})$ is the triangle \seqnum{A101280} that begins 
$$\left(
\begin{array}{ccccccc}
 1 & 0 & 0 & 0 & 0 & 0 & 0 \\
 1 & 0 & 0 & 0 & 0 & 0 & 0 \\
 1 & 2 & 0 & 0 & 0 & 0 & 0 \\
 1 & 8 & 0 & 0 & 0 & 0 & 0 \\
 1 & 22 & 16 & 0 & 0 & 0 & 0 \\
 1 & 52 & 136 & 0 & 0 & 0 & 0 \\
 1 & 114 & 720 & 272 & 0 & 0 & 0 \\
\end{array}
\right).$$ 
This is the triangle of $\gamma$-vectors for the permutahedra (of type $A$). It also gives the number of permutations of $n$ objects with $k$ descents such that every descent is a peak \cite{Shapiro}.
\end{example}
\begin{example} 
We consider the Pascal-like matrix $(h_{n,k})=\left(\frac{1}{1-x},x\right)$ that begins 
$$\left(
\begin{array}{ccccccc}
 1 & 0 & 0 & 0 & 0 & 0 & 0 \\
 1 & 1 & 0 & 0 & 0 & 0 & 0 \\
 1 & 1 & 1 & 0 & 0 & 0 & 0 \\
 1 & 1 & 1 & 1 & 0 & 0 & 0 \\
 1 & 1 & 1 & 1 & 1 & 0 & 0 \\
 1 & 1 & 1 & 1 & 1 & 1 & 0 \\
 1 & 1 & 1 & 1 & 1 & 1 & 1 \\
\end{array}
\right).$$ We note that the row elements are constant. We have that 
$$\gamma_{n,k}=\sum_{i=0}^k \sum_{j=0}^n \binom{2i-1}{i-j}\frac{2j+0^{i+j}}{i+j+0^{i+j}} \text{If}\left(k=i,1,\sum_{m=0}^{k-i} \frac{m(-1)^m}{k-i} \binom{n-1+m}{m}\binom{2(k-i)}{k-i-m}\right).$$
We find that the $\gamma$-matrix in this case begins
$$\left(
\begin{array}{ccccccc}
 1 & 0 & 0 & 0 & 0 & 0 & 0 \\
 1 & 0 & 0 & 0 & 0 & 0 & 0 \\
 1 & -1 & 0 & 0 & 0 & 0 & 0 \\
 1 & -2 & 0 & 0 & 0 & 0 & 0 \\
 1 & -3 & 1 & 0 & 0 & 0 & 0 \\
 1 & -4 & 3 & 0 & 0 & 0 & 0 \\
 1 & -5 & 6 & -1 & 0 & 0 & 0 \\
\end{array}
\right).$$ 
This is the matrix $\left(\binom{n-k}{k}(-1)^k\right)$. Thus
$$\sum_{i=0}^{\lfloor \frac{n}{2} \rfloor} \binom{n-2i}{k-i}\binom{n-i}{i}(-1)^i = \text{If}[k \le n,1,0].$$ 
\end{example}
\section{Stretched Riordan arrays as $\gamma$-matrices}
Every stretched Riordan array of the form 
$$\left(\frac{1}{1-x}, x^2g(x)\right),$$ where 
$$g(x)=1+g_1x+g_2x^2+\cdots$$ can be used to generate a Pascal-like matrix. Thus to each power series $g(x)$ above we can associate a Pascal-like matrix whose $\gamma$-matrix is given by this stretched Riordan array. 

In this section, we shall concentrate on the case when $g(x)=\frac{1+rx}{1-x}$. 
\begin{example}
For $r=1$, we obtain the $\gamma$-matrix that begins 
$$\left(
\begin{array}{ccccccc}
 1 & 0 & 0 & 0 & 0 & 0 & 0 \\
 1 & 0 & 0 & 0 & 0 & 0 & 0 \\
 1 & 1 & 0 & 0 & 0 & 0 & 0 \\
 1 & 3 & 0 & 0 & 0 & 0 & 0 \\
 1 & 5 & 1 & 0 & 0 & 0 & 0 \\
 1 & 7 & 5 & 0 & 0 & 0 & 0 \\
 1 & 9 & 13 & 1 & 0 & 0 & 0 \\
\end{array}
\right).$$ 
The corresponding Pascal-like matrix then begins 
$$\left(
\begin{array}{ccccccc}
 1 & 0 & 0 & 0 & 0 & 0 & 0 \\
 1 & 1 & 0 & 0 & 0 & 0 & 0 \\
 1 & 3 & 1 & 0 & 0 & 0 & 0 \\
 1 & 6 & 6 & 1 & 0 & 0 & 0 \\
 1 & 9 & 17 & 9 & 1 & 0 & 0 \\
 1 & 12 & 36 & 36 & 12 & 1 & 0 \\
 1 & 15 & 64 & 101 & 64 & 15 & 1 \\
\end{array}
\right).$$ 
The row sums of this matrix, which begin 
$$1, 2, 5, 14, 37, 98, 261,\ldots$$ give \seqnum{A077938}, with generating function 
$$\frac{1}{1-2x-x^2-2x^3}.$$ 
The diagonal sums, which begin 
$$1, 1, 2, 4, 8, 16, 31,\ldots$$ are the Pentanacci numbers \seqnum{A001591} with generating function 
$$\frac{1}{1-x-x^2-x^3-x^4-x^5}.$$ 
\end{example}
We have the following proposition.
\begin{proposition} The Pascal-like triangle that begins 
$$\left(
\begin{array}{ccccccc}
 1 & 0 & 0 & 0 & 0 & 0 & 0 \\
 1 & 1 & 0 & 0 & 0 & 0 & 0 \\
 1 & 3 & 1 & 0 & 0 & 0 & 0 \\
 1 & r+5 & r+5 & 1 & 0 & 0 & 0 \\
 1 & 2 r+7 & 4 r+13 & 2 r+7 & 1 & 0 & 0 \\
 1 & 3 r+9 & 11 r+25 & 11 r+25 & 3 r+9 & 1 & 0 \\
 1 & 4 r+11 & r^2+22 r+41 & 2 r^2+36 r+63 & r^2+22 r+41 & 4 r+11 & 1 \\
\end{array}
\right)$$
with $\gamma$-matrix given by the stretched Riordan array $\left(\frac{1}{1-x}, \frac{x^2(1+rx)}{1-x}\right)$, 
has row sums with generating function 
$$\frac{1}{1-2x-x^2-2rx^3},$$ and diagonal sums given by the generalized Pentanacci numbers with generating function
$$\frac{1}{1-x-x^2-x^3-rx^4-rx^5}.$$
\end{proposition}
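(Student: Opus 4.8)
The plan is to carry everything out at the level of bivariate generating functions. First I would record the bivariate generating function of the $\gamma$-matrix. Since the $\gamma$-matrix is the stretched Riordan array $\left(\frac{1}{1-x}, \frac{x^2(1+rx)}{1-x}\right)$, and the bivariate generating function of a stretched Riordan array $(g(x),\phi(x))$ is $\frac{g(x)}{1-y\phi(x)}$, we get
$$\gamma(x,y)=\frac{\frac{1}{1-x}}{1-y\,\frac{x^2(1+rx)}{1-x}}=\frac{1}{1-x-x^2y-rx^3y}.$$
Note that the $(n,k)$-entry of this array is $[x^{n-2k}]\frac{(1+rx)^k}{(1-x)^{k+1}}$, which vanishes for $n<2k$, so it is a legitimate $\gamma$-matrix.

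Next I would recover the bivariate generating function $h(x,y)$ of the Pascal-like triangle generated by this $\gamma$-matrix. From the defining relation $P_n(x)=(1+x)^n\gamma_n\!\left(\frac{x}{(1+x)^2}\right)$ one obtains, exactly as in the proof of the proposition characterizing the $\gamma$-matrices of $\left(\frac{1}{1-x},\frac{x(1+rx)}{1-x}\right)$, the substitution identity
$$h(x,y)=\gamma\!\left((1+y)x,\ \frac{y}{(1+y)^2}\right).$$
Substituting the formula for $\gamma(x,y)$ and simplifying — the factors of $(1+y)^2$ cancel against $((1+y)x)^2$ and $((1+y)x)^3$ — yields
$$h(x,y)=\frac{1}{1-(1+y)x-x^2y-r(1+y)x^3y}.$$
As a sanity check, the coefficient of $x^3$ here is $(1+y)\bigl(1+(r+4)y+y^2\bigr)=1+(r+5)y+(r+5)y^2+y^3$, in agreement with the displayed triangle; and the array is Pascal-like by the principle recorded at the start of this section, since $\left(\frac{1}{1-x},\frac{x^2(1+rx)}{1-x}\right)$ has the form $\left(\frac{1}{1-x},x^2g(x)\right)$.

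The two assertions are then immediate specializations. The row-sum generating function is $h(x,1)$, and setting $y=1$ gives $\dfrac{1}{1-2x-x^2-2rx^3}$. For the diagonal sums, observe that $\sum_n\bigl(\sum_k h_{n-k,k}\bigr)x^n=\sum_{m,k}h_{m,k}x^{m+k}=h(x,x)$, so the diagonal-sum generating function is obtained by setting $y=x$:
$$h(x,x)=\frac{1}{1-(1+x)x-x^3-r(1+x)x^4}=\frac{1}{1-x-x^2-x^3-rx^4-rx^5},$$
which is the stated generalized Pentanacci generating function.

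There is no real obstacle here; the computation is short once the right machinery is in place. The one point requiring care is to use the substitution $h(x,y)=\gamma\!\left((1+y)x,y/(1+y)^2\right)$ rather than the INVERT($y$) relation from the corollary above: that relation is special to the ordinary (unstretched) Riordan array case and already fails in row $3$ for the stretched array $\left(\frac{1}{1-x},\frac{x^2(1+rx)}{1-x}\right)$, where it would produce $h_{3,2}=5$ instead of the correct $r+5$.
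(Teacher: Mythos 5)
Your argument is correct; note that the paper states this proposition without proof, and your derivation uses exactly the machinery the paper deploys for the analogous ordinary-Riordan-array case, namely the bivariate generating function $\gamma(x,y)=\frac{1}{1-x-x^2y-rx^3y}$ of the stretched array together with the substitution $h(x,y)=\gamma\left((1+y)x,\ \frac{y}{(1+y)^2}\right)$, followed by the specializations $y=1$ for the row sums and $y=x$ for the diagonal sums. Your closing caveat --- that the INVERT($y$) relation of the earlier corollary is particular to $\gamma$-generating functions quadratic in $x$ and would give the wrong entry $h_{3,2}=5$ here --- is accurate and is a point worth recording, since the paper never flags it.
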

\section{Reverting triangles}
Let $h(x,y)$ be the generating function of the lower-triangular matrix $h_{n,k}$, with $h_{0,0}=1$.
By the reversion of this triangle, we shall mean the triangle whose generating function $h^*(x,y)$ is given by
$$h^*(x,y)=\frac{1}{x} \text{Rev}_x (x h(x,y)).$$
Procedurally, this means that we solve the equation
$$u h(u,y)=x$$  and then we divide the solution $u(x,y)$ that satisfies $u(0,y)=0$ by $x$.
\begin{proposition} The generating function of the reversion of the Pascal-like matrix defined by the Riordan array $\left(\frac{1}{1-x},\frac{x(1+rx)}{1-x}\right)$ is given by
$$h^*(x,y)=\frac{1}{1+x(y+1)}c\left(\frac{-rx^2y}{(1+x(y+1))^2}\right),$$ where
$$c(x)=\frac{1-\sqrt{1-4x}}{2x}$$ is the generating function of the Catalan numbers $C_n=\frac{1}{n+1} \binom{2n}{n}$. (\seqnum{A000108}).
\end{proposition}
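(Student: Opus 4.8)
The plan is to turn the reversion equation into a quadratic and then recognise the defining functional equation of the Catalan generating function, namely $c(z)=1+z\,c(z)^2$.

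First I would recall, from the proof of the proposition characterising the $\gamma$-matrices of these Riordan arrays, that
$$h(x,y)=\frac{1}{1-(1+y)x-rx^2y}.$$
By the definition of reversion, $h^*(x,y)=u/x$, where $u=u(x,y)$ is the unique formal power series with $u(0,y)=0$ satisfying $u\,h(u,y)=x$. Substituting the closed form of $h$ and clearing the denominator, this equation reads $u=x\bigl(1-(1+y)u-ryu^2\bigr)$, that is,
$$rxy\,u^2+\bigl(1+(1+y)x\bigr)u-x=0.$$

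Next I would introduce the new unknown $w=u/x=h^*(x,y)$. Dividing the quadratic above by $x$ gives
$$rx^2y\,w^2+\bigl(1+(1+y)x\bigr)w-1=0.$$
Setting $A=A(x,y)=1+(1+y)x$ and substituting $w=v/A$ gives, since the coefficient of $w$ above is exactly $A$,
$$\frac{rx^2y}{A^2}\,v^2+v-1=0, \qquad \text{equivalently} \qquad v=1+\left(\frac{-rx^2y}{A^2}\right)v^2.$$
Since $-rx^2y/A^2$ has order $2$ in $x$ as a formal power series, the composition $c\bigl(-rx^2y/A^2\bigr)$ is well defined, and it is exactly the solution of this last equation whose constant term equals $1$.

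Finally I would fix the branch. The equation $v=1+zv^2$ has two formal solutions in $v$; the requirement $u(0,y)=0$ forces $w$, and hence $v=Aw$, to be an honest power series with $v(0,y)=w(0,y)$ finite, so we must take $v=c\bigl(-rx^2y/A^2\bigr)$ rather than the root with a pole at $x=0$. Therefore
$$h^*(x,y)=\frac{v}{A}=\frac{1}{1+(1+y)x}\,c\!\left(\frac{-rx^2y}{(1+(1+y)x)^2}\right),$$
which is the asserted formula once one writes $1+(1+y)x=1+x(y+1)$. The only step that needs any care is this branch selection; everything else is the routine solution of a quadratic, and the branch is pinned down automatically by the normalisation $u(0,y)=0$ that is built into the definition of reversion.
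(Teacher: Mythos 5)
Your proposal is correct and follows essentially the same route as the paper: both reduce $u\,h(u,y)=x$ with $h(x,y)=\frac{1}{1-(1+y)x-rx^2y}$ to the quadratic $rxy\,u^2+(1+x(y+1))u-x=0$ and solve it. The only (cosmetic) difference is that you identify the Catalan factor through its functional equation $c(z)=1+z\,c(z)^2$ after normalising by $A=1+x(y+1)$, whereas the paper writes out the root via the quadratic formula and matches it against the explicit radical form of $c$.
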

\begin{proof} Solving the equation
$$\frac{u}{1-u(y+1)-ru^2y}=x$$ gives us
$$h^*(x,y)=\frac{-1-x(y+1)+\sqrt{1+2x(y+1)+x^2(1+2y(2r+1)+y^2)}}{2rx^2y}.$$
Thus $$h^*(x,y)=\frac{1}{1+x(y+1)}c\left(\frac{-rx^2y}{(1+x(y+1))^2}\right).$$
\end{proof}
We note that we can now calculate an expression for the terms of the reverted triangle, since, using the language of Riordan arrays, we have
$$h^*(x,y)=\left(\frac{1}{1+y(x+1)}, \frac{-rx^2y}{(1+x(y+1))^2}\right)\cdot c(x).$$
\begin{proposition}
We have
$$[x^n][y^i]h^*(x,y)=h_{n,i}^*=\sum_{k=0}^{\lfloor \frac{n}{2} \rfloor} (-1)^n(-r)^k \binom{n}{2k}C_k \binom{n-2k}{i-k}.$$
The $\gamma$-matrix of the reverted triangle $(h_{n,k}^*)$ is given by
$$\gamma_{n,k}^*=(-1)^n (-r)^k \binom{n}{2k}C_k.$$
The $\gamma$-matrix $(\gamma_{n,k}^*)$ of the reverted triangle $(h_{n,k}^*)$ is the reversion of the triangle $\gamma_{n,k}$.
\end{proposition}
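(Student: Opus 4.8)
The plan is to establish the three assertions in sequence. The first two I would read off from the closed form $h^*(x,y)=\frac{1}{1+x(y+1)}c\!\left(\frac{-rx^2y}{(1+x(y+1))^2}\right)$ obtained in the preceding proposition, and the third from an explicit closed form for the bivariate generating function of $(\gamma_{n,k}^*)$.

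\emph{First two assertions.} I would expand the Catalan series $c(z)=\sum_{k\ge 0}C_kz^k$ and apply the negative binomial theorem $(1+t)^{-(2k+1)}=\sum_{m\ge 0}(-1)^m\binom{2k+m}{m}t^m$ with $t=x(y+1)$. Collecting the coefficient of $x^n$ forces $m=n-2k$ (hence $0\le k\le\lfloor n/2\rfloor$), and using $(-1)^{n-2k}=(-1)^n$ and $\binom{n}{n-2k}=\binom{n}{2k}$ one is led to
$$[x^n]h^*(x,y)=(-1)^n\sum_{k=0}^{\lfloor n/2\rfloor}(-r)^kC_k\binom{n}{2k}\,y^k(1+y)^{n-2k}.$$
This display is \emph{already} the $\gamma$-expansion of $h_n^*(y)$: the right-hand side is a polynomial in $y$ of degree $n$ (its $k=0$ term is $(-1)^n(1+y)^n$), and extracting $[y^i]$ shows $h_{n,i}^*=h_{n,n-i}^*$, so each $h_n^*$ is a reciprocal polynomial and the result of Gal recalled in the introduction applies. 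By the uniqueness asserted there, $\gamma_{n,k}^*=(-1)^n(-r)^k\binom{n}{2k}C_k$; extracting $[y^i]$ from the same display (equivalently, invoking the earlier identity $h_{n,k}=\sum_i\binom{n-2i}{k-i}\gamma_{n,i}$) then gives $h_{n,i}^*=\sum_{k}(-1)^n(-r)^k\binom{n}{2k}C_k\binom{n-2k}{i-k}$.

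\emph{Third assertion.} I would compute $G(x,y):=\sum_{n,k}\gamma_{n,k}^*x^ny^k$ directly. Summing over $n$ first and using $\sum_n\binom{n}{2k}t^n=t^{2k}/(1-t)^{2k+1}$ at $t=-x$ resums the series to
$$G(x,y)=\sum_{k\ge0}(-r)^kC_ky^k\,\frac{x^{2k}}{(1+x)^{2k+1}}=\frac{1}{1+x}\,c\!\left(\frac{-rx^2y}{(1+x)^2}\right).$$
Since the generating function of the $\gamma$-matrix of this Riordan family is $\gamma(x,y)=\frac{1}{1-x-rx^2y}$ (with general term $\binom{n-k}{k}r^k$), it remains to verify $G(x,y)=\frac1x\text{Rev}_x\!\big(x\gamma(x,y)\big)$. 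I would do this without radicals: writing $z=\frac{-rx^2y}{(1+x)^2}$ and using the quadratic identity $c(z)=1+zc(z)^2$, the relation $G=\frac{1}{1+x}c(z)$ becomes $(1+x)G=1-rx^2yG^2$, i.e.\ $G=1-xG-rx^2yG^2$. Setting $u=xG$ and multiplying through by $x$ turns this into $u=x(1-u-ru^2y)$, that is, $u\,\gamma(u,y)=x$. Since $u=xG$ vanishes at $x=0$, $u$ is the compositional inverse in $x$ of $x\gamma(x,y)$, so $G=\frac1x\text{Rev}_x(x\gamma(x,y))$; by the definition of reversion this says precisely that $(\gamma_{n,k}^*)$ is the reversion of $(\gamma_{n,k})$.

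The routine parts are the two generating-function expansions. The only points needing care are the choice of branch of the algebraic function — neatly sidestepped by using $c(z)=1+zc(z)^2$ in place of the radical form of $c$ — and the observation that each $h_n^*(y)$ is a genuine reciprocal polynomial of degree $n$, even though its extreme coefficients equal $(-1)^n$ rather than $1$; palindromy of the coefficients is immediate from the displayed formula, and that is all Gal's lemma requires. As a structural remark worth including, combining the third assertion with the INVERT$(\pm y)$ description established earlier shows that, on this one-parameter family, reverting a triangle and passing to its $\gamma$-matrix commute.
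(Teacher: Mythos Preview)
Your argument is correct and rests on the same closed form $\gamma^*(x,y)=\frac{1}{1+x}\,c\!\left(\frac{-rx^2y}{(1+x)^2}\right)$ that the paper uses, but the logical order is inverted. The paper first reverts $\gamma(x,y)=\frac{1}{1-x-rx^2y}$ (solving the quadratic in $u$) to obtain this generating function directly, then reads off the formula $\gamma_{n,k}^*=(-1)^n(-r)^k\binom{n}{2k}C_k$ and infers the remaining statements from there; the ``direct calculation'' for $h_{n,i}^*$ is left to the reader. You instead expand $h^*(x,y)$ fully, observe that the resulting expression for $[x^n]h^*(x,y)$ is already a $\gamma$-decomposition and invoke uniqueness to identify $\gamma_{n,k}^*$, and only then resum to the closed form and verify via the Catalan functional equation that it is the reversion of $\gamma$. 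Your route is slightly longer but more self-contained: it makes explicit why $\gamma_{n,k}^*$ really is the $\gamma$-matrix of $(h_{n,k}^*)$ (a point the paper's one-line proof leaves implicit), and your use of $c(z)=1+zc(z)^2$ in place of the radical is a clean way to confirm the reversion without branch-choice issues.
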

\begin{proof}
The expression for $h_{n,k}^*$ results from a direct calculation.
Reverting the expression $\gamma(x,y)=\frac{1}{1-x-rx^2y}$ in the sense above gives us
$$\gamma^*(x,y)=\frac{1}{1+x}c\left(\frac{-rx^2y}{(1+x)^2}\right),$$ from which we deduce the other statements.
\end{proof}
\begin{example} For $r=-1,0,1$, the triangles $(h_{n,k})$ begin, respectively,
$$\left(
\begin{array}{ccccccc}
 1 & 0 & 0 & 0 & 0 & 0 & 0 \\
 1 & 1 & 0 & 0 & 0 & 0 & 0 \\
 1 & 1 & 1 & 0 & 0 & 0 & 0 \\
 1 & 1 & 1 & 1 & 0 & 0 & 0 \\
 1 & 1 & 1 & 1 & 1 & 0 & 0 \\
 1 & 1 & 1 & 1 & 1 & 1 & 0 \\
 1 & 1 & 1 & 1 & 1 & 1 & 1 \\
\end{array}
\right),\left(
\begin{array}{ccccccc}
 1 & 0 & 0 & 0 & 0 & 0 & 0 \\
 1 & 1 & 0 & 0 & 0 & 0 & 0 \\
 1 & 2 & 1 & 0 & 0 & 0 & 0 \\
 1 & 3 & 3 & 1 & 0 & 0 & 0 \\
 1 & 4 & 6 & 4 & 1 & 0 & 0 \\
 1 & 5 & 10 & 10 & 5 & 1 & 0 \\
 1 & 6 & 15 & 20 & 15 & 6 & 1 \\
\end{array}
\right),$$ and
$$\left(
\begin{array}{ccccccc}
 1 & 0 & 0 & 0 & 0 & 0 & 0 \\
 1 & 1 & 0 & 0 & 0 & 0 & 0 \\
 1 & 3 & 1 & 0 & 0 & 0 & 0 \\
 1 & 5 & 5 & 1 & 0 & 0 & 0 \\
 1 & 7 & 13 & 7 & 1 & 0 & 0 \\
 1 & 9 & 25 & 25 & 9 & 1 & 0 \\
 1 & 11 & 41 & 63 & 41 & 11 & 1 \\
\end{array}
\right).$$  The corresponding reverted triangles $(h_{n,k}^*)$, are, respectively,
$$\left(
\begin{array}{ccccccc}
 1 & 0 & 0 & 0 & 0 & 0 & 0 \\
 -1 & -1 & 0 & 0 & 0 & 0 & 0 \\
 1 & 3 & 1 & 0 & 0 & 0 & 0 \\
 -1 & -6 & -6 & -1 & 0 & 0 & 0 \\
 1 & 10 & 20 & 10 & 1 & 0 & 0 \\
 -1 & -15 & -50 & -50 & -15 & -1 & 0 \\
 1 & 21 & 105 & 175 & 105 & 21 & 1 \\
\end{array}
\right),\left(
\begin{array}{ccccccc}
 1 & 0 & 0 & 0 & 0 & 0 & 0 \\
 -1 & -1 & 0 & 0 & 0 & 0 & 0 \\
 1 & 2 & 1 & 0 & 0 & 0 & 0 \\
 -1 & -3 & -3 & -1 & 0 & 0 & 0 \\
 1 & 4 & 6 & 4 & 1 & 0 & 0 \\
 -1 & -5 & -10 & -10 & -5 & -1 & 0 \\
 1 & 6 & 15 & 20 & 15 & 6 & 1 \\
\end{array}
\right),$$ and
$$\left(
\begin{array}{ccccccc}
 1 & 0 & 0 & 0 & 0 & 0 & 0 \\
 -1 & -1 & 0 & 0 & 0 & 0 & 0 \\
 1 & 1 & 1 & 0 & 0 & 0 & 0 \\
 -1 & 0 & 0 & -1 & 0 & 0 & 0 \\
 1 & -2 & -4 & -2 & 1 & 0 & 0 \\
 -1 & 5 & 10 & 10 & 5 & -1 & 0 \\
 1 & -9 & -15 & -15 & -15 & -9 & 1 \\
\end{array}
\right).$$
Note that for $r=-1$, the reverted triangle is $(-1)^n$ times the Narayana triangle \seqnum{A001263}.

The corresponding $\gamma$-matrices $(\gamma_{n,k})$ are given by, respectively,
$$\left(
\begin{array}{ccccccc}
 1 & 0 & 0 & 0 & 0 & 0 & 0 \\
 1 & 0 & 0 & 0 & 0 & 0 & 0 \\
 1 & -1 & 0 & 0 & 0 & 0 & 0 \\
 1 & -2 & 0 & 0 & 0 & 0 & 0 \\
 1 & -3 & 1 & 0 & 0 & 0 & 0 \\
 1 & -4 & 3 & 0 & 0 & 0 & 0 \\
 1 & -5 & 6 & -1 & 0 & 0 & 0 \\
\end{array}
\right), \left(
\begin{array}{ccccccc}
 1 & 0 & 0 & 0 & 0 & 0 & 0 \\
 1 & 0 & 0 & 0 & 0 & 0 & 0 \\
 1 & 0 & 0 & 0 & 0 & 0 & 0 \\
 1 & 0 & 0 & 0 & 0 & 0 & 0 \\
 1 & 0 & 0 & 0 & 0 & 0 & 0 \\
 1 & 0 & 0 & 0 & 0 & 0 & 0 \\
 1 & 0 & 0 & 0 & 0 & 0 & 0 \\
\end{array}
\right),$$ and
$$\left(
\begin{array}{ccccccc}
 1 & 0 & 0 & 0 & 0 & 0 & 0 \\
 1 & 0 & 0 & 0 & 0 & 0 & 0 \\
 1 & 1 & 0 & 0 & 0 & 0 & 0 \\
 1 & 2 & 0 & 0 & 0 & 0 & 0 \\
 1 & 3 & 1 & 0 & 0 & 0 & 0 \\
 1 & 4 & 3 & 0 & 0 & 0 & 0 \\
 1 & 5 & 6 & 1 & 0 & 0 & 0 \\
\end{array}
\right).$$
The corresponding reverted $\gamma$-matrices $(\gamma_{n,k}^*)$ are then, respectively,
$$\left(
\begin{array}{ccccccc}
 1 & 0 & 0 & 0 & 0 & 0 & 0 \\
 -1 & 0 & 0 & 0 & 0 & 0 & 0 \\
 1 & 1 & 0 & 0 & 0 & 0 & 0 \\
 -1 & -3 & 0 & 0 & 0 & 0 & 0 \\
 1 & 6 & 2 & 0 & 0 & 0 & 0 \\
 -1 & -10 & -10 & 0 & 0 & 0 & 0 \\
 1 & 15 & 30 & 5 & 0 & 0 & 0 \\
\end{array}
\right), \left(
\begin{array}{ccccccc}
 1 & 0 & 0 & 0 & 0 & 0 & 0 \\
 -1 & 0 & 0 & 0 & 0 & 0 & 0 \\
 1 & 0 & 0 & 0 & 0 & 0 & 0 \\
 -1 & 0 & 0 & 0 & 0 & 0 & 0 \\
 1 & 0 & 0 & 0 & 0 & 0 & 0 \\
 -1 & 0 & 0 & 0 & 0 & 0 & 0 \\
 1 & 0 & 0 & 0 & 0 & 0 & 0 \\
\end{array}
\right),$$ and
$$\left(
\begin{array}{ccccccc}
 1 & 0 & 0 & 0 & 0 & 0 & 0 \\
 -1 & 0 & 0 & 0 & 0 & 0 & 0 \\
 1 & -1 & 0 & 0 & 0 & 0 & 0 \\
 -1 & 3 & 0 & 0 & 0 & 0 & 0 \\
 1 & -6 & 2 & 0 & 0 & 0 & 0 \\
 -1 & 10 & -10 & 0 & 0 & 0 & 0 \\
 1 & -15 & 30 & -5 & 0 & 0 & 0 \\
\end{array}
\right).$$
\end{example}
It is interesting to represent the generating functions of the $(\gamma_{n,k}^*)$ and the $(h_{n,k}^*)$ triangles as Jacobi continued fractions. We have
\begin{proposition}
The generating function $h^*(x,y)$ can be expressed as the Jacobi continued fraction
$$\mathcal{J}(-(y+1),-(y+1),-(y+1),\ldots; -ry,-ry,-ry,\ldots).$$
The generating function $\gamma^*(x,y)$ can be expressed as the Jacobi continued fraction
$$\mathcal{J}(-1,-1,-1,\ldots; -ry,-ry,-ry,\ldots).$$
\end{proposition}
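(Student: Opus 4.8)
The plan is to reduce both statements to a single, easily checked functional equation. Recall that for parameters $\alpha,\beta$ the constant-coefficient Jacobi continued fraction
$$\mathcal{J}(\alpha,\alpha,\ldots;\beta,\beta,\ldots)=\cfrac{1}{1-\alpha x-\cfrac{\beta x^2}{1-\alpha x-\cfrac{\beta x^2}{\ddots}}}$$
is, by its self-similar structure, the unique formal power series $F=F(x)$ with $F(0)=1$ satisfying
$$F=\frac{1}{1-\alpha x-\beta x^2 F},$$
equivalently $\beta x^2F^2-(1-\alpha x)F+1=0$; this relation determines the coefficients of $F$ recursively, and the depth-$m$ truncation of the continued fraction agrees with $F$ through order $x^m$. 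So it suffices to verify that $h^*(x,y)$ satisfies this relation with $(\alpha,\beta)=(-(y+1),-ry)$ and that $\gamma^*(x,y)$ satisfies it with $(\alpha,\beta)=(-1,-ry)$.

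For $h^*$, I would use the defining identity $c(z)=\dfrac{1}{1-zc(z)}$ of the Catalan generating function. Writing $u=1+(y+1)x$ and $w=\dfrac{-rx^2y}{u^2}$, the closed form for $h^*(x,y)$ obtained above reads $h^*=c(w)/u$, whence $c(w)=u\,h^*$ and
$$h^*=\frac{c(w)}{u}=\frac{1}{u\bigl(1-w\,c(w)\bigr)}=\frac{1}{u-u^2w\,h^*}=\frac{1}{1+(y+1)x+rx^2y\,h^*}.$$
Since $h^*(0,y)=c(0)=1$, this is exactly the functional equation above with $\alpha=-(y+1)$ and $\beta=-ry$, which gives the first claim. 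Repeating the computation verbatim with $u$ replaced by $1+x$ and $w$ by $\dfrac{-rx^2y}{(1+x)^2}$ — which is the closed form for $\gamma^*(x,y)$ established above — yields $\gamma^*=\dfrac{1}{1+x+rx^2y\,\gamma^*}$, i.e. the second claim.

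I expect no real obstacle here; the only point needing a word is that the continued fraction and the radical formula select the same root of the quadratic, which is settled by the uniqueness of the formal power series solution with constant term $1$. As an alternative that bypasses $c(z)$ entirely, one could instead start from the explicit radical expression for $h^*(x,y)$ obtained in the reversion proposition and check that the discriminant $(1-\alpha x)^2-4\beta x^2$ with $(\alpha,\beta)=(-(y+1),-ry)$ equals $1+2(y+1)x+(1+2y(2r+1)+y^2)x^2$, the quantity under the square root there, and similarly for $\gamma^*$; the algebra is routine in either case.
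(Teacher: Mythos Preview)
Your proof is correct and follows essentially the same approach as the paper: both exploit the self-similarity of a constant-coefficient Jacobi continued fraction to reduce the claim to the quadratic functional equation $u=\dfrac{1}{1-\alpha x-\beta x^2 u}$, then match this with the previously derived closed forms for $h^*$ and $\gamma^*$. Your version verifies the equation via the Catalan identity $c(z)=1/(1-zc(z))$ rather than solving the quadratic, and you are more explicit about branch selection, but the underlying idea is the same.
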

\begin{proof}
We solve the continued fraction equation
$$u=\frac{1}{1+(y+1)x+rx^2 u}$$ to retrieve the generating function $h^*(x,y)$.
Similarly, we solve the continued fraction equation
$$u=\frac{1}{1+x+rx^2 u}$$ to retrieve the generating function $\gamma^*(x,y)$.
\end{proof}
Note that we have used the notation $\mathcal{J}(a,b,c,\ldots;r,s,t,\ldots)$ to denote the Jacobi continued fraction \cite{CFT, Wall}
$$\cfrac{1}{1-a x-
\cfrac{r x^2}{1- b x-
\cfrac{s x^2}{1- c x-
\cfrac{t x^2}{1-\cdots}}}}.$$
We can now express the relationship between the generating functions $h^*(x,y)$ and $\gamma^*(x,y)$ in terms of repeated binomial transforms. 
\begin{corollary} The generating function $h^*(x,y)$ is the $(-y)$-th binomial transform of the $\gamma$ generating function $\gamma^*(x,y)$:
$$h^*(x,y)=\frac{1}{1+xy}\gamma^*\left(\frac{x}{1+xy},y\right).$$
Equivalently,  the $\gamma$ generating function $\gamma^*(x,y)$ is the $y$-th binomial transform of the generating function $h^*(x,y)$:
$$\gamma^*(x,y)=\frac{1}{1-xy}h^*\left(\frac{x}{1-xy},y\right).$$
\end{corollary}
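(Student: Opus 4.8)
The plan is to derive the corollary directly from the continued-fraction equations established in the preceding proposition, rather than from the explicit closed forms involving $c(x)$, since the functional-equation approach makes the role of the binomial transform transparent. Recall from the proof of that proposition that $u = h^*(x,y)$ is the unique power series in $x$ (with coefficients in $\mathbb{Q}[[y]]$ and constant term $1$) solving
$$u = \frac{1}{1+(y+1)x + rx^2 u},$$
and that $v = \gamma^*(x,y)$ is the unique such solution of
$$v = \frac{1}{1+x + rx^2 v}.$$

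First I would introduce the candidate $w = w(x,y) := \frac{1}{1+xy}\,\gamma^*\!\left(\frac{x}{1+xy},\,y\right)$; this is a well-defined element of $\mathbb{Q}[[x,y]]$ because $1+xy$ is a unit in $\mathbb{Q}[[x,y]]$ and $x \mapsto x/(1+xy)$ is a legitimate (and invertible) substitution of formal power series. Writing $\tilde{x} = x/(1+xy)$, so that $\gamma^*(\tilde x, y) = (1+xy)\,w$, I would substitute into the functional equation for $\gamma^*$ and simplify using the identities $1 + \tilde x = (1 + (y+1)x)/(1+xy)$ and $r\tilde x^{\,2} (1+xy) = rx^2/(1+xy)$. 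A short computation collapses the resulting identity to
$$w = \frac{1}{1+(y+1)x + rx^2 w}.$$
Since this is precisely the equation characterizing $h^*(x,y)$, and since $w(0,y)=1$, uniqueness of the fixed-point power series forces $w = h^*(x,y)$, which is the first displayed formula.

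For the equivalent statement, I would observe, exactly as in the INVERT discussion earlier in the paper, that the map $f(x) \mapsto \frac{1}{1-xy}\, f\!\left(\frac{x}{1-xy}\right)$ is inverse to the map $f(x)\mapsto \frac{1}{1+xy}\,f\!\left(\frac{x}{1+xy}\right)$: composing the substitutions $x\mapsto x/(1+xy)$ and $x \mapsto x/(1-xy)$ returns $x$, and the two prefactors multiply to $1$. Applying this inverse transform to both sides of $h^*(x,y)=\frac{1}{1+xy}\gamma^*\!\left(\frac{x}{1+xy},y\right)$ yields $\gamma^*(x,y)=\frac{1}{1-xy}h^*\!\left(\frac{x}{1-xy},y\right)$.

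I do not anticipate a genuine obstacle: the only point requiring mild care is that every manipulation takes place in the ring $\mathbb{Q}[[x,y]]$, where $1\pm xy$ are units, so the substitutions and the appeal to uniqueness of the fixed-point series are all legitimate. One may alternatively verify the first formula by brute substitution into the closed forms $h^*(x,y)=\frac{1}{1+x(y+1)}c\!\left(\frac{-rx^2y}{(1+x(y+1))^2}\right)$ and $\gamma^*(x,y)=\frac{1}{1+x}c\!\left(\frac{-rx^2y}{(1+x)^2}\right)$, checking that $1 + \tilde x = (1+x(y+1))/(1+xy)$ carries the $\gamma^*$ prefactor and Catalan argument onto those of $h^*$; this is the same computation in another guise. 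Conceptually, the content of the corollary is the standard fact that the binomial transform with parameter $\alpha$ acts on a Jacobi continued fraction $\mathcal{J}(a_0,a_1,\ldots;r_1,r_2,\ldots)$ by shifting each $a_i$ to $a_i+\alpha$ while fixing the $r_i$; here $\alpha=-y$ sends $\mathcal{J}(-1,-1,\ldots;-ry,-ry,\ldots)=\gamma^*(x,y)$ to $\mathcal{J}(-(y+1),-(y+1),\ldots;-ry,-ry,\ldots)=h^*(x,y)$, which is exactly the previous proposition.
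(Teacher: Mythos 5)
Your argument is correct, and it is worth noting that the paper offers no proof of this corollary at all: it is stated as an immediate consequence of the preceding Jacobi-continued-fraction proposition together with the standard fact (from the continued-fractions reference) that the $\alpha$-th binomial transform shifts every $a_i$ of $\mathcal{J}(a_0,a_1,\ldots;r_1,r_2,\ldots)$ by $\alpha$ while fixing the $r_i$ --- exactly the observation you make in your closing paragraph. Your main argument supplies a clean, self-contained verification of precisely that fact in the case at hand: substituting $\tilde x = x/(1+xy)$ into the fixed-point equation for $\gamma^*$, using $1+\tilde x=(1+x(y+1))/(1+xy)$, and invoking uniqueness of the power-series root with constant term $1$. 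The inverse-transform step for the second identity is also fine. One small caveat you inherit from the paper: the functional equations as printed there should read $u=1/(1+(y+1)x+ryx^2u)$ and $v=1/(1+x+ryx^2v)$ (with $ry$, not $r$, in the quadratic term), as one checks from $c(z)=1+zc(z)^2$ applied to the closed forms $h^*=\frac{1}{1+x(y+1)}c\bigl(\frac{-rx^2y}{(1+x(y+1))^2}\bigr)$ and $\gamma^*=\frac{1}{1+x}c\bigl(\frac{-rx^2y}{(1+x)^2}\bigr)$; your computation goes through verbatim with $ry$ in place of $r$, since the key identity becomes $ry\,\tilde x^{\,2}(1+xy)=ryx^2/(1+xy)$, so this is a typographical matter rather than a gap.
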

This reflects the general assertion that the reversion of an INVERT transform is a binomial transform.

\section{The $\gamma$-vectors of generalized Narayana numbers}
The Riordan array $\left(\frac{1}{1+x}, \frac{-x(1+rx)}{1+x}\right)$, with bivariate generating function
$$\frac{1}{1+x(y+1)+rx^2y},$$ has a $\gamma$-matrix with generating function
$$\frac{1}{1+x+rx^2y}.$$
We shall call elements of the reversions of the Riordan array $\left(\frac{1}{1+x}, \frac{-x(1+rx)}{1+x}\right)$ $r$-Narayana numbers. The Narayana numbers $N_{n,k}=\frac{1}{k+1} \binom{n+1}{k}\binom{n}{k}$ are then the $1$-Narayana numbers.
The bivariate generating function for the $r$-Narayana numbers is given by
$$\frac{1}{1-x(y+1)}c\left(\frac{rx^2y}{(1-x(y+1))^2}\right).$$
The bivariate generating function for the $\gamma$-matrix of the $r$-Narayana numbers is then obtained by reverting the generating function $\frac{1}{1+x+rx^2y}$. We thus obtain the following result.
\begin{proposition}
The $\gamma$-matrix for the $r$-Narayana numbers has generating function
$$\frac{1}{1-x}c\left(\frac{rx^2y}{(1-x)^2}\right).$$
\end{proposition}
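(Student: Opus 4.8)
The plan is to obtain the stated generating function by computing directly the reversion of $\gamma(x,y)=\dfrac{1}{1+x+rx^2y}$, using the reverting procedure recalled at the start of the section on reverting triangles. Thus, with $\gamma^*(x,y)=\tfrac1x\,\text{Rev}_x\!\bigl(x\,\gamma(x,y)\bigr)$, I must solve $u\,\gamma(u,y)=x$ for the branch $u=u(x,y)$ satisfying $u(0,y)=0$ and then put $\gamma^*(x,y)=u(x,y)/x$; the assertion is precisely that this equals $\frac{1}{1-x}c\!\left(\frac{rx^2y}{(1-x)^2}\right)$.

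First I would clear denominators in $\dfrac{u}{1+u+ru^2y}=x$, obtaining the quadratic $rxy\,u^2+(x-1)u+x=0$ in $u$, whose roots are $u=\dfrac{(1-x)\pm\sqrt{(1-x)^2-4rx^2y}}{2rxy}$. The normalization $u(0,y)=0$ forces the minus sign: factoring $\sqrt{(1-x)^2-4rx^2y}=(1-x)\sqrt{1-\tfrac{4rx^2y}{(1-x)^2}}$ shows that $(1-x)-\sqrt{(1-x)^2-4rx^2y}$ vanishes to order $x^2$, so $u=x+O(x^2)$ is a genuine power series, whereas the plus sign produces a pole at $x=0$. Dividing by $x$ gives $\gamma^*(x,y)=\dfrac{(1-x)-\sqrt{(1-x)^2-4rx^2y}}{2rx^2y}$.

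It then remains to recognize this as $\frac{1}{1-x}c\!\left(\frac{rx^2y}{(1-x)^2}\right)$. I would substitute $z=\dfrac{rx^2y}{(1-x)^2}$ into $c(z)=\dfrac{1-\sqrt{1-4z}}{2z}$, use $1-4z=\dfrac{(1-x)^2-4rx^2y}{(1-x)^2}$ and hence (on the branch equal to $1$ at $x=0$) $\sqrt{1-4z}=\dfrac{\sqrt{(1-x)^2-4rx^2y}}{1-x}$, and simplify; a one-line manipulation reproduces exactly the expression for $\gamma^*(x,y)$ above.

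A square-root-free alternative is available through the results already proved: $\gamma(x,y)$ is obtained from the ordinary-Riordan $\gamma$-generating function $\tfrac1{1-x-rx^2y}$ by the substitution $x\mapsto -x,\ y\mapsto -y$, the reversion of $\tfrac1{1-x-rx^2y}$ was computed earlier to be $\tfrac1{1+x}c\!\bigl(\tfrac{-rx^2y}{(1+x)^2}\bigr)$, and reversion commutes with the substitution $(x,y)\mapsto(-x,-y)$; feeding $(x,y)\mapsto(-x,-y)$ into that reverted expression gives $\tfrac1{1-x}c\!\bigl(\tfrac{rx^2y}{(1-x)^2}\bigr)$ at once. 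In either approach the computation is routine; the only point requiring care — and the step most likely to be mishandled — is the selection of the branch of the square root, equivalently the correct power-series root of the defining quadratic.
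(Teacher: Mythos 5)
Your proposal is correct and follows essentially the route the paper intends: the paper states that the result is "obtained by reverting the generating function $\frac{1}{1+x+rx^2y}$" and omits the details, relying on the identical computation carried out for $\frac{1}{1-u(y+1)-ru^2y}$ in the reverting-triangles section. You supply exactly those details (the quadratic, the branch selection via $u(0,y)=0$, and the identification with the Catalan generating function), and your alternative via the substitution $(x,y)\mapsto(-x,-y)$ applied to the earlier reverted $\gamma$-generating function is a valid shortcut consistent with the paper's own observations.
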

This is the matrix that begins
$$\left(
\begin{array}{ccccccc}
 1 & 0 & 0 & 0 & 0 & 0 & 0 \\
 1 & 0 & 0 & 0 & 0 & 0 & 0 \\
 1 & r & 0 & 0 & 0 & 0 & 0 \\
 1 & 3 r & 0 & 0 & 0 & 0 & 0 \\
 1 & 6 r & 2 r^2 & 0 & 0 & 0 & 0 \\
 1 & 10 r & 10 r^2 & 0 & 0 & 0 & 0 \\
 1 & 15 r & 30 r^2 & 5 r^3 & 0 & 0 & 0 \\
\end{array}
\right),$$ with general term
$$\binom{n}{2k}r^kC_k.$$
For $r=-1,0,1$, the matrices $\left(\frac{1}{1+x}, \frac{-x(1+rx)}{1+x}\right)$ begin, respectively,
$$\left(
\begin{array}{ccccccc}
 1 & 0 & 0 & 0 & 0 & 0 & 0 \\
 -1 & -1 & 0 & 0 & 0 & 0 & 0 \\
 1 & 3 & 1 & 0 & 0 & 0 & 0 \\
 -1 & -5 & -5 & -1 & 0 & 0 & 0 \\
 1 & 7 & 13 & 7 & 1 & 0 & 0 \\
 -1 & -9 & -25 & -25 & -9 & -1 & 0 \\
 1 & 11 & 41 & 63 & 41 & 11 & 1 \\
\end{array}
\right),\left(
\begin{array}{ccccccc}
 1 & 0 & 0 & 0 & 0 & 0 & 0 \\
 -1 & -1 & 0 & 0 & 0 & 0 & 0 \\
 1 & 2 & 1 & 0 & 0 & 0 & 0 \\
 -1 & -3 & -3 & -1 & 0 & 0 & 0 \\
 1 & 4 & 6 & 4 & 1 & 0 & 0 \\
 -1 & -5 & -10 & -10 & -5 & -1 & 0 \\
 1 & 6 & 15 & 20 & 15 & 6 & 1 \\
\end{array}
\right),$$ and
$$\left(
\begin{array}{ccccccc}
 1 & 0 & 0 & 0 & 0 & 0 & 0 \\
 -1 & -1 & 0 & 0 & 0 & 0 & 0 \\
 1 & 1 & 1 & 0 & 0 & 0 & 0 \\
 -1 & -1 & -1 & -1 & 0 & 0 & 0 \\
 1 & 1 & 1 & 1 & 1 & 0 & 0 \\
 -1 & -1 & -1 & -1 & -1 & -1 & 0 \\
 1 & 1 & 1 & 1 & 1 & 1 & 1 \\
\end{array}
\right).$$
The corresponding matrices of $r$-Narayana numbers are, respectively,
$$\left(
\begin{array}{ccccccc}
 1 & 0 & 0 & 0 & 0 & 0 & 0 \\
 1 & 1 & 0 & 0 & 0 & 0 & 0 \\
 1 & 1 & 1 & 0 & 0 & 0 & 0 \\
 1 & 0 & 0 & 1 & 0 & 0 & 0 \\
 1 & -2 & -4 & -2 & 1 & 0 & 0 \\
 1 & -5 & -10 & -10 & -5 & 1 & 0 \\
 1 & -9 & -15 & -15 & -15 & -9 & 1 \\
\end{array}
\right), \left(
\begin{array}{ccccccc}
 1 & 0 & 0 & 0 & 0 & 0 & 0 \\
 1 & 1 & 0 & 0 & 0 & 0 & 0 \\
 1 & 2 & 1 & 0 & 0 & 0 & 0 \\
 1 & 3 & 3 & 1 & 0 & 0 & 0 \\
 1 & 4 & 6 & 4 & 1 & 0 & 0 \\
 1 & 5 & 10 & 10 & 5 & 1 & 0 \\
 1 & 6 & 15 & 20 & 15 & 6 & 1 \\
\end{array}
\right),$$ and
$$\left(
\begin{array}{ccccccc}
 1 & 0 & 0 & 0 & 0 & 0 & 0 \\
 1 & 1 & 0 & 0 & 0 & 0 & 0 \\
 1 & 3 & 1 & 0 & 0 & 0 & 0 \\
 1 & 6 & 6 & 1 & 0 & 0 & 0 \\
 1 & 10 & 20 & 10 & 1 & 0 & 0 \\
 1 & 15 & 50 & 50 & 15 & 1 & 0 \\
 1 & 21 & 105 & 175 & 105 & 21 & 1 \\
\end{array}
\right).$$
This last matrix, as expected, is the Narayana triangle \seqnum{A001263}.
The corresponding $\gamma$-matrices for these $r$-Narayana triangles are, respectively,
$$\left(
\begin{array}{ccccccc}
 1 & 0 & 0 & 0 & 0 & 0 & 0 \\
 1 & 0 & 0 & 0 & 0 & 0 & 0 \\
 1 & -1 & 0 & 0 & 0 & 0 & 0 \\
 1 & -3 & 0 & 0 & 0 & 0 & 0 \\
 1 & -6 & 2 & 0 & 0 & 0 & 0 \\
 1 & -10 & 10 & 0 & 0 & 0 & 0 \\
 1 & -15 & 30 & -5 & 0 & 0 & 0 \\
\end{array}
\right), \left(
\begin{array}{ccccccc}
 1 & 0 & 0 & 0 & 0 & 0 & 0 \\
 1 & 0 & 0 & 0 & 0 & 0 & 0 \\
 1 & 0 & 0 & 0 & 0 & 0 & 0 \\
 1 & 0 & 0 & 0 & 0 & 0 & 0 \\
 1 & 0 & 0 & 0 & 0 & 0 & 0 \\
 1 & 0 & 0 & 0 & 0 & 0 & 0 \\
 1 & 0 & 0 & 0 & 0 & 0 & 0 \\
\end{array}
\right),$$ and
$$\left(
\begin{array}{ccccccc}
 1 & 0 & 0 & 0 & 0 & 0 & 0 \\
 1 & 0 & 0 & 0 & 0 & 0 & 0 \\
 1 & 1 & 0 & 0 & 0 & 0 & 0 \\
 1 & 3 & 0 & 0 & 0 & 0 & 0 \\
 1 & 6 & 2 & 0 & 0 & 0 & 0 \\
 1 & 10 & 10 & 0 & 0 & 0 & 0 \\
 1 & 15 & 30 & 5 & 0 & 0 & 0 \\
\end{array}
\right).$$  This last matrix is \seqnum{A055151}. The rows of this triangle are the $\gamma$-vectors of the $n$-dimensional (type A) associahedra \cite{Petersen}. We have seen that its elements are given by 
$$\gamma_{n,k}=\sum_{i=0}^k \sum_{j=0}^n N_{n,j}\binom{2i-1}{i-j}\frac{2j+0^{i+j}}{i+j+0^{i+j}} \text{If}\left(k=i,1,\sum_{m=0}^{k-i} \frac{m(-1)^m}{k-i} \binom{n-1+m}{m}\binom{2(k-i)}{k-i-m}\right),$$ where 
$N_{n,k}$ denotes the $(n,k)$-th Narayana number \seqnum{A001263}.

The relationship between the $\gamma$-matrix and the $r$-Narayana numbers can be further clarified as follows.

\begin{proposition}
The generating function of the $r$-Narayana numbers can be expressed as the Jacobi continued fraction
$$\mathcal{J}((y+1),(y+1),(y+1),\ldots; ry,ry,ry,\ldots).$$
The generating function of the corresponding $\gamma$-matrix can be expressed as the Jacobi continued fraction
$$\mathcal{J}(1,1,1,\ldots; ry,ry,ry,\ldots).$$
\end{proposition}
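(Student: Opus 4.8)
The plan is to convert each Jacobi continued fraction into the quadratic fixed–point equation forced by the self–similarity of its tail, solve that equation, and recognize the solution as a closed form already recorded in this section. Concretely, when all the partial numerators and denominators are constant in $x$ (with $y$ a parameter), the generating function $u=u(x,y)$ of $\mathcal{J}(a,a,a,\ldots;\lambda,\lambda,\lambda,\ldots)$ has a tail that is a copy of $u$ itself, so
$$u=\cfrac{1}{1-ax-\lambda x^2 u},$$
that is $\lambda x^2 u^2-(1-ax)u+1=0$. Imposing $u(0,y)=1$ selects the branch
$$u=\frac{(1-ax)-\sqrt{(1-ax)^2-4\lambda x^2}}{2\lambda x^2},$$
a well–defined element of $\mathbb{Q}(y)[[x]]$.

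For the first assertion I would set $a=y+1$ and $\lambda=ry$. Writing $z=\dfrac{ryx^2}{(1-(y+1)x)^2}$ one has the factorization $(1-(y+1)x)^2-4ryx^2=(1-(y+1)x)^2(1-4z)$, and hence, taking principal branches at $x=0$,
$$u=\frac{(1-(y+1)x)\bigl(1-\sqrt{1-4z}\bigr)}{2ryx^2}=\frac{1}{1-(y+1)x}\cdot\frac{1-\sqrt{1-4z}}{2z}=\frac{1}{1-(y+1)x}\,c(z),$$
using $c(z)=\frac{1-\sqrt{1-4z}}{2z}$ (the substitution is legitimate since $z=O(x^2)$). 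This is exactly the bivariate generating function of the $r$-Narayana numbers given at the start of the section. For the second assertion the computation is identical with $a=1$ in place of $a=y+1$: taking $z=\dfrac{ryx^2}{(1-x)^2}$ gives $u=\dfrac{1}{1-x}\,c(z)$, which is the generating function of the $\gamma$-matrix from the preceding proposition.

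The only point requiring care is the sign of the square root, which is pinned down by the normalization $u(0,y)=1$; past that, everything is the routine algebra above, so I expect no serious obstacle. One may additionally observe that these continued fractions are the unsigned mirrors of those found earlier for $h^*(x,y)$ and $\gamma^*(x,y)$: substituting $x\mapsto -x$ and $r\mapsto -r$ in the equations $u=\frac{1}{1+(y+1)x+ryx^2u}$ and $u=\frac{1}{1+x+ryx^2u}$ produces precisely the fixed–point equations above, so the proposition also follows from that earlier result by this substitution.
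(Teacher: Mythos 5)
Your proposal is correct and matches the paper's method: the paper leaves this proposition unproved but establishes the analogous statement for $h^*(x,y)$ and $\gamma^*(x,y)$ by exactly your route, namely solving the tail fixed-point equation $u=\frac{1}{1-ax-\lambda x^2u}$ and matching the resulting closed form. Your write-up simply supplies the branch-selection and factorization details that the paper elides, and your closing remark (the $x\mapsto -x$, $r\mapsto -r$ substitution) is a valid alternative derivation from the earlier proposition.
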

\begin{corollary} The generating function of the $r$-Narayana numbers is the $y$-th binomial transform of the generating function of the corresponding $\gamma$-matrix.
$$h^*(x,y)=\frac{1}{1-xy}\gamma^*\left(\frac{x}{1-xy},y\right).$$
Equivalently,  the $\gamma$ generating function $\gamma^*(x,y)$ is the $(-y)$-th binomial transform of the generating function $h^*(x,y)$:
$$\gamma^*(x,y)=\frac{1}{1+xy}h^*\left(\frac{x}{1+xy},y\right).$$
\end{corollary}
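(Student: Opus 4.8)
The plan is to deduce both displayed identities from the Jacobi continued fraction expansions of $h^*(x,y)$ and $\gamma^*(x,y)$ recorded in the previous proposition, together with the fact that a binomial transform acts on a constant $J$-fraction by a uniform shift of its linear coefficient sequence while leaving the quadratic coefficient sequence unchanged.

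First I would note that, by the definition of the $J$-fraction $\mathcal{J}(a,b,c,\ldots;r,s,t,\ldots)$, the series $\gamma^*(x,y)=\mathcal{J}(1,1,\ldots;ry,ry,\ldots)$ is the solution $u=u(x)$ of the self-similar equation $u=\frac{1}{1-x-ryx^2u}$, while $h^*(x,y)=\mathcal{J}((y+1),(y+1),\ldots;ry,ry,\ldots)$ is the solution of $u=\frac{1}{1-(y+1)x-ryx^2u}$; the two defining equations differ only in that the coefficient $1$ of $x$ is replaced by $1+y$. It therefore suffices to establish the elementary substitution lemma: if $u=u(x)$ solves $u=\frac{1}{1-ax-bx^2u}$, then $U(x):=\frac{1}{1-\lambda x}u\!\left(\frac{x}{1-\lambda x}\right)$ solves $U=\frac{1}{1-(a+\lambda)x-bx^2U}$. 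To prove it, write $X=\frac{x}{1-\lambda x}$, so that $X(1-\lambda x)=x$; substituting $x\mapsto X$ into the equation for $u$, multiplying through by $1-\lambda x$, and using $X(1-\lambda x)=x$ and $X^2(1-\lambda x)^2=x^2$ collapses the relation to $U=\frac{1}{1-(a+\lambda)x-bx^2U}$. Applying this with $a=1$, $b=ry$ and $\lambda=y$ carries the defining equation of $\gamma^*$ to the defining equation of $h^*$, which is precisely the first identity $h^*(x,y)=\frac{1}{1-xy}\gamma^*\!\left(\frac{x}{1-xy},y\right)$.

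The second identity is then just the inverse transform: the $\lambda$-binomial transform $g(x)\mapsto\frac{1}{1-\lambda x}g\!\left(\frac{x}{1-\lambda x}\right)$ is inverted by the $(-\lambda)$-binomial transform, since the substitution $x\mapsto\frac{x}{1+\lambda x}$ satisfies both $\frac{x/(1+\lambda x)}{1-\lambda x/(1+\lambda x)}=x$ and $\frac{1}{1+\lambda x}\cdot\frac{1}{1-\lambda x/(1+\lambda x)}=1$; applying this to the first identity yields $\gamma^*(x,y)=\frac{1}{1+xy}h^*\!\left(\frac{x}{1+xy},y\right)$. As a cross-check one may instead verify the first identity directly from the closed forms $\gamma^*(x,y)=\frac{1}{1-x}c\!\left(\frac{rx^2y}{(1-x)^2}\right)$ and $h^*(x,y)=\frac{1}{1-x(y+1)}c\!\left(\frac{rx^2y}{(1-x(y+1))^2}\right)$, using $1-\frac{x}{1-xy}=\frac{1-x(y+1)}{1-xy}$ to simplify the argument of $c$. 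The corollary is routine given the previous proposition; the only point that requires care, and which I expect to be the only source of friction, is the orientation of the transform, namely that here $h^*$ is the $(+y)$-binomial transform of $\gamma^*$, in contrast to the non-reverted Pascal-like matrices, where the direction of the INVERT transform makes $\gamma$ the $(-y)$-transform of $h$.
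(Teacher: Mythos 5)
Your proof is correct and follows the route the paper intends: the corollary is stated there without proof as an immediate consequence of the preceding $J$-fraction proposition, relying on the standard fact that the $\lambda$-binomial transform shifts the linear coefficients of a Jacobi continued fraction by $\lambda$ while leaving the quadratic coefficients fixed. Your substitution lemma (obtained by applying $X(1-\lambda x)=x$ to the self-similar equation $u=\frac{1}{1-ax-bx^2u}$) supplies exactly the detail the paper takes for granted, and your closed-form cross-check using $1-\frac{x}{1-xy}=\frac{1-x(y+1)}{1-xy}$ also verifies correctly.
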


\section{Pascal-like triangles defined by exponential Riordan arrays}

We recall that an exponential Riordan array $[g(x), f(x)]$ \cite{Book, DeutschShap} is defined by two exponential generating functions
$$g(x)=1+ g_1 \frac{x}{1!}+g_2 \frac{x}{2!}+ \cdots,$$ and 
$$f(x)=\frac{x}{1!}+f_2 \frac{x^2}{2!}+\cdots,$$ with its $(n,k)$-th term $a_{n,k}$ given by 
$$a_{n,k}=\frac{n!}{k!} [x^n] g(x)f(x)^k.$$ 
In the context of Pascal-like matrices, we have that the exponential Riordan array
$$\left[e^x, x(1+rx/2)\right],$$ with general term
$$h_{n,k}=\frac{n!}{k!} \sum_{j=0}^k \frac{r^j}{(n-k-j)!2^j},$$ is a Pascal-like matrix \cite{Pas_Exp}. This matrix begins 
$$\left(
\begin{array}{ccccccc}
 1 & 0 & 0 & 0 & 0 & 0 & 0 \\
 1 & 1 & 0 & 0 & 0 & 0 & 0 \\
 1 & r+2 & 1 & 0 & 0 & 0 & 0 \\
 1 & 3r+3 & 3r+3 & 1 & 0 & 0 & 0 \\
 1 & 6 r+4 & 3r^2+12r+6 & 6 r+4 & 1 & 0 & 0 \\
 1 & 10 r+5 & 15r^2+30r+10 & 15r^2+30r+10 & 10 r+5 & 1 & 0 \\
 1 & 15 r+6 & 45r^2+60r+15 & 15r^3+90r^2+90r+20 & 45r^2+60r+15 & 15 r+6 & 1 \\
\end{array}
\right).$$
We have the following result.
\begin{proposition} The $\gamma$-matrix of the the Pascal-like exponential Riordan array $\left[e^x, x(1+rx/2)\right]$ is the matrix with general term
$$ \binom{n}{2k}r^k (2k-1)!!$$
\end{proposition}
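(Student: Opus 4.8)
The plan is to work entirely with bivariate exponential generating functions, paralleling the proof of Proposition 8 for ordinary Riordan arrays. Recall that the bivariate generating function $\sum_{n,k} h_{n,k}\frac{x^n}{n!}y^k$ of an exponential Riordan array $[g(x),f(x)]$ is $g(x)e^{yf(x)}$. So the first step is the easy computation
$$h(x,y) = e^x e^{y\,x(1+rx/2)} = \exp\!\left((1+y)x + \frac{ry}{2}x^2\right).$$

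The second step is to identify the bivariate exponential generating function of the candidate matrix $\gamma_{n,k} = \binom{n}{2k}r^k(2k-1)!!$. Writing $\binom{n}{2k}\frac{x^n}{n!} = \frac{x^n}{(2k)!(n-2k)!}$ and summing over $n \ge 2k$ produces a factor $e^x x^{2k}/(2k)!$; then the identity $(2k)! = 2^k k!\,(2k-1)!!$ is exactly what is needed to make the $k$-sum exponentiate:
$$\gamma(x,y) = e^x\sum_{k\ge 0}\frac{r^k(2k-1)!!}{(2k)!}\,x^{2k}y^k = e^x\sum_{k\ge 0}\frac{1}{k!}\left(\frac{rx^2y}{2}\right)^{\!k} = \exp\!\left(x + \frac{ry}{2}x^2\right).$$
Note in passing that $\binom{n}{2k} = 0$ for $2k > n$, so $\gamma_n$ has degree $\lfloor n/2\rfloor$ as required.

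The third step records the generating-function form of the defining relation $P_n(x) = (1+x)^n\gamma_n\!\left(\frac{x}{(1+x)^2}\right)$ of Proposition 1: multiplying by $\frac{x^n}{n!}$ and summing over $n$, the $n$-dependent factor $(1+x)^n$ is absorbed as a rescaling of the generating-function variable, giving
$$h(x,y) = \gamma\!\left((1+y)x,\ \frac{y}{(1+y)^2}\right),$$
the exponential analogue of the identity used in the proof of Proposition 8. Substituting the two closed forms found above into the right-hand side yields
$$\gamma\!\left((1+y)x,\ \frac{y}{(1+y)^2}\right) = \exp\!\left((1+y)x + \frac{r}{2}\,(1+y)^2x^2\cdot\frac{y}{(1+y)^2}\right) = \exp\!\left((1+y)x + \frac{ry}{2}x^2\right) = h(x,y),$$
and extracting $n![x^n]$ recovers $h_n(x) = (1+x)^n\gamma_n\!\left(\frac{x}{(1+x)^2}\right)$, so by the uniqueness in Proposition 1 the matrix $(\gamma_{n,k})$ is indeed the $\gamma$-matrix of $\left[e^x, x(1+rx/2)\right]$.

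I do not anticipate a real obstacle: once the generating functions are written down the verification is immediate. The only two places requiring care are the double-factorial bookkeeping in step two (the collapse hinges on $(2k)! = 2^k k!\,(2k-1)!!$) and, in step three, correctly absorbing the $n$-dependent factor $(1+x)^n$ into the generating-function variable rather than treating it as a constant. A purely combinatorial alternative would substitute $\gamma_{n,i} = \binom{n}{2i}r^i(2i-1)!!$ into the formula $h_{n,k} = \sum_i\binom{n-2i}{k-i}\gamma_{n,i}$ of Proposition 2 and match the result against the explicit expression $h_{n,k} = \frac{n!}{k!}\sum_j\frac{r^j}{(n-k-j)!\,2^j}$ recorded above, via a Vandermonde-type summation; the generating-function route is cleaner.
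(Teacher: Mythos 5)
Your proof is correct, and it is worth noting that it supplies more than the paper itself does at this point: the paper states this proposition without proof, and only afterwards establishes the generating function $e^{x(1+rxy/2)}$ of the $\gamma$-matrix by taking the $(-y)$-th binomial transform $e^{-xy}h(x,y)$ of $h(x,y)=e^xe^{xy(1+rx/2)}$ (with the Jacobi continued fraction expansions $\mathcal{J}(y+1,y+1,\ldots;ry,2ry,\ldots)$ versus $\mathcal{J}(1,1,\ldots;ry,2ry,\ldots)$ offered as the structural explanation), leaving the reader to extract the coefficient $\binom{n}{2k}r^k(2k-1)!!$ from $e^{x+rx^2y/2}$. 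You instead transplant the method of the ordinary-Riordan-array case (Proposition 8) to the exponential setting: compute both bivariate generating functions in closed form and verify the defining substitution identity $h(x,y)=\gamma\bigl((1+y)x,\,y/(1+y)^2\bigr)$ directly, then invoke the uniqueness in Proposition 1. Your key computations are right --- the absorption of $(1+y)^n$ into the variable $x$ works identically for exponential generating functions since the factor pairs with $x^n/n!$, and the collapse $\sum_k \frac{r^k(2k-1)!!}{(2k)!}x^{2k}y^k = e^{rx^2y/2}$ via $(2k)!=2^kk!\,(2k-1)!!$ is exactly the double-factorial bookkeeping needed. What your route buys is self-containment: it proves the stated general term without having to justify separately why the $\gamma$-matrix should be a $(-y)$-th binomial transform of the Pascal-like matrix (a fact that is special to these families and that the paper asserts rather than derives); what the paper's route buys is the structural insight that, at the level of J-fractions, passing to the $\gamma$-matrix simply replaces the constant diagonal $y+1$ by $1$.
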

In fact, the generating function of the exponential Riordan array $\left[e^x, x(1+rx/2)\right]$ is given by
$$\mathcal{J}(y+1,y+1,y+1,\ldots;ry,2ry,3ry,\ldots)$$ while that of its $\gamma$-matrix is given by
$$\mathcal{J}(1,1,1,\ldots;ry,2ry,3ry,\ldots).$$
\begin{proposition} The generating function of the $\gamma$-matrix of the Pascal-like exponential Riordan array
$\left[e^x, x(1+rx/2)\right]$ has generating function
$$e^{x(1+rxy/2)}.$$
\end{proposition}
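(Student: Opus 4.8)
The plan is to read ``generating function'' here as the bivariate generating function that is \emph{exponential} in $x$ and ordinary in $y$, that is $\gamma(x,y)=\sum_{n\ge 0}\sum_{k\ge 0}\gamma_{n,k}\frac{x^n}{n!}y^k$, which is the convention under which an exponential Riordan array $[g(x),f(x)]$ has bivariate generating function $g(x)e^{yf(x)}$; this reading is forced because, for instance, at $r=0$ the $\gamma$-matrix is the all-ones first column, and only the exponential generating function $\sum_n x^n/n!=e^x$ agrees with the $r=0$ specialization of $e^{x(1+rxy/2)}$. I would then feed the closed form $\gamma_{n,k}=\binom{n}{2k}r^k(2k-1)!!$ from the preceding proposition into this double sum and evaluate it directly.

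Concretely, the first step is to interchange the two summations and perform the sum over $n$ for fixed $k$: since $\binom{n}{2k}/n!=1/((2k)!\,(n-2k)!)$, one has $\sum_{n\ge 0}\binom{n}{2k}\frac{x^n}{n!}=\frac{x^{2k}}{(2k)!}\,e^{x}$, so that $\gamma(x,y)=e^{x}\sum_{k\ge 0}\frac{(2k-1)!!}{(2k)!}(rx^2y)^k$. The second step is to apply the elementary identity $(2k-1)!!=(2k)!/(2^k k!)$, valid for all $k\ge 0$ with the convention $(-1)!!=1$, which reduces the inner series to $\sum_{k\ge 0}\frac{1}{k!}(rx^2y/2)^k=e^{rx^2y/2}$. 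Multiplying the two exponentials then yields $\gamma(x,y)=e^{x}e^{rx^2y/2}=e^{x(1+rxy/2)}$, as claimed.

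There is essentially no hard step in this argument; the only points requiring care are the bookkeeping of which generating function is intended and the $k=0$ edge case of the double factorial. For completeness I would record two alternatives. First, one can bypass the explicit $\gamma_{n,k}$ entirely: the array itself has bivariate exponential generating function $e^{x}e^{y(x+rx^2/2)}=e^{x(1+y)+rx^2y/2}$, and the defining relation $P_n(x)=(1+x)^n\gamma_n(x/(1+x)^2)$ translates, for these generating functions, into $h(x,y)=\gamma(x(1+y),\,y/(1+y)^2)$, exactly as in the ordinary Riordan case treated earlier in the paper; since $x^2y=(x(1+y))^2\cdot y/(1+y)^2$, the substitution $u=x(1+y)$, $v=y/(1+y)^2$ turns the exponent $x(1+y)+rx^2y/2$ into $u+ru^2v/2$, giving $\gamma(u,v)=e^{u+ru^2v/2}$. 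This uses only that the formal substitution $(x,y)\mapsto(x(1+y),y/(1+y)^2)$ is invertible, the same point already invoked in the paper. Second, one can recognize the $\gamma$-matrix $J$-fraction $\mathcal{J}(1,1,1,\ldots;ry,2ry,3ry,\ldots)$ stated just above as the moment $J$-fraction of a mean-shifted Hermite-type orthogonal polynomial sequence with variance parameter $ry$, whose moment exponential generating function is precisely $e^{x+rx^2y/2}$; this makes the formula $e^{x(1+rxy/2)}$ immediate once the classical Hermite moment computation is quoted.
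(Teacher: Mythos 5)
Your main argument is correct, but it takes a different route from the paper. The paper's proof is a two-line transform argument: it writes down the bivariate exponential generating function $e^{x}e^{xy(1+rx/2)}=e^{x(1+y)+rx^2y/2}$ of the array $\left[e^x,x(1+rx/2)\right]$ and then applies the $(-y)$-th binomial transform (multiplication by $e^{-xy}$ at the level of exponential generating functions), which strips the exponent down to $x+rx^2y/2$. This leans on the assertion --- justified only implicitly, via the two Jacobi continued fractions $\mathcal{J}(y+1,y+1,\ldots;ry,2ry,\ldots)$ and $\mathcal{J}(1,1,\ldots;ry,2ry,\ldots)$ displayed just before the proposition --- that the $\gamma$ generating function is exactly this transform of $h(x,y)$. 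Your primary computation instead takes the closed form $\gamma_{n,k}=\binom{n}{2k}r^k(2k-1)!!$ from the preceding proposition and sums it directly, using $\sum_n\binom{n}{2k}x^n/n!=x^{2k}e^x/(2k)!$ and $(2k-1)!!=(2k)!/(2^kk!)$; this is a complete and elementary verification that buys independence from the binomial-transform relationship (at the cost of depending on the closed form, which the paper also states without proof). Your identification of the intended convention (exponential in $x$, ordinary in $y$) is right and worth making explicit, since the paper never says so. Of your two alternatives, the substitution $h(x,y)=\gamma(x(1+y),y/(1+y)^2)$ is essentially equivalent to the paper's transform step in this exponential setting (both amount to removing the factor $e^{xy}$), while the Hermite-moment observation is a genuinely different and pleasant third reading consistent with the paper's citation of \seqnum{A059344} at $r=2$.
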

\begin{proof}
By the theory of exponential Riordan arrays, the generating function of the Riordan array
$\left[e^x, x(1+rx/2)\right]$ is given by
$$e^x e^{xy(1+rx/2)}.$$
Taking the $(-y)$-th binomial transform of this, we obtain
$$e^{x(1+rxy/2)}.$$
\end{proof}
\begin{example} For $r=1$, we get the $\gamma$-matrix that begins
$$\left(
\begin{array}{ccccccc}
 1 & 0 & 0 & 0 & 0 & 0 & 0 \\
 1 & 0 & 0 & 0 & 0 & 0 & 0 \\
 1 & 1 & 0 & 0 & 0 & 0 & 0 \\
 1 & 3 & 0 & 0 & 0 & 0 & 0 \\
 1 & 6 & 3 & 0 & 0 & 0 & 0 \\
 1 & 10 & 15 & 0 & 0 & 0 & 0 \\
 1 & 15 & 45 & 15 & 0 & 0 & 0 \\
\end{array}
\right).$$ This is \seqnum{A100861}, the triangle of Bessel numbers that count the number of $k$-matchings of the complete graph $K(n)$. The corresponding Pascal-like matrix begins
$$\left(
\begin{array}{ccccccc}
 1 & 0 & 0 & 0 & 0 & 0 & 0 \\
 1 & 1 & 0 & 0 & 0 & 0 & 0 \\
 1 & 3 & 1 & 0 & 0 & 0 & 0 \\
 1 & 6 & 6 & 1 & 0 & 0 & 0 \\
 1 & 10 & 21 & 10 & 1 & 0 & 0 \\
 1 & 15 & 55 & 55 & 15 & 1 & 0 \\
 1 & 21 & 120 & 215 & 120 & 21 & 1 \\
\end{array}
\right).$$ This is \seqnum{A100862}, which counts the number of $k$-matchings of the corona $K'(n)$ of the complete graph $K(n)$ and the complete graph $K(1)$.
\end{example}
\begin{example}
For $r=2$, we obtain the $\gamma$-matrix that begins
$$\left(
\begin{array}{ccccccc}
 1 & 0 & 0 & 0 & 0 & 0 & 0 \\
 1 & 0 & 0 & 0 & 0 & 0 & 0 \\
 1 & 2 & 0 & 0 & 0 & 0 & 0 \\
 1 & 6 & 0 & 0 & 0 & 0 & 0 \\
 1 & 12 & 12 & 0 & 0 & 0 & 0 \\
 1 & 20 & 60 & 0 & 0 & 0 & 0 \\
 1 & 30 & 180 & 120 & 0 & 0 & 0 \\
\end{array}
\right).$$ This is \seqnum{A059344}, where row $n$ consists of the nonzero coefficients of the expansion of $2^n x^n$ in terms of Hermite polynomials with decreasing subscripts. The corresponding Pascal-like matrix begins
$$\left(
\begin{array}{ccccccc}
 1 & 0 & 0 & 0 & 0 & 0 & 0 \\
 1 & 1 & 0 & 0 & 0 & 0 & 0 \\
 1 & 4 & 1 & 0 & 0 & 0 & 0 \\
 1 & 9 & 9 & 1 & 0 & 0 & 0 \\
 1 & 16 & 42 & 16 & 1 & 0 & 0 \\
 1 & 25 & 130 & 130 & 25 & 1 & 0 \\
 1 & 36 & 315 & 680 & 315 & 36 & 1 \\de
\end{array}
\right).$$ The row sums of this matrix are given by \seqnum{A000898}, the number of symmetric involutions of $[2n]$ (Deutsch). 
\end{example}
\section{Conclusion} It is the case that the set of Pascal-like matrices defined by Riordan arrays is a restricted one. Nevertheless, we hope that this note indicates that they have interesting properties, including in particular their generating $\gamma$-matrices. In the case of Pascal-like matrices defined by ordinary Riordan arrays, we have seen that be reverting them, we find additional (signed) Pascal-like triangles, including triangles of Narayana type. The $\gamma$-matrices of these new triangles are again the reversions of the original triangles' $\gamma$-matrices. 

We have also shown that stretched Riordan arrays play a useful role, and in particular can lead to further (non-Riordan) Pascal-like matrices. We have also found it useful to use Riordan array techniques to find an explicit closed form formula for the elements $\gamma_{n,k}$ of the $\gamma$-matrix in terms of $h_{n,k}$.

\bigskip
\hrule
\bigskip
\noindent 2010 {\it Mathematics Subject Classification}: Primary
11B83; Secondary 33C45, 42C-5, 15B36, 15B05, 14N10, 11C20.
\noindent \emph{Keywords:} Gamma vector, Pascal-like triangle, Riordan array, Narayana number, Eulerian number, associahedron, permutahedron.

\bigskip
\hrule
\bigskip
\noindent (Concerned with sequences
\seqnum{A000108},
\seqnum{A000898},
\seqnum{A001263},
\seqnum{A001591},
\seqnum{A007318},
\seqnum{A008288},
\seqnum{A008292},
\seqnum{A055151},
\seqnum{A059344},
\seqnum{A077938},
\seqnum{A100861},
\seqnum{A100862}, 
\seqnum{A101280}, and
\seqnum{A271875}.)

\end{document}